\documentclass[11pt,reqno]{amsart}
\usepackage{ytableau, tikz, hyperref}
\usepackage{palatino}
\usepackage{mathtools}
\usepackage[left=3cm,right=3cm,top=3cm,bottom=3cm]{geometry}
\usepackage[shortlabels,inline]{enumitem}
\newcommand\characx{\mathbf{x}}
\newcommand{\N}{\mathcal{N}}

\newtheorem{theorem}{Theorem}
\newtheorem{remark}[theorem]{Remark}
\newtheorem{example}[theorem]{Example}
\newcommand\bremark{\begin{remark}\begin{upshape}}
\newcommand\eremark{\end{upshape}\end{remark}}
\newtheorem{proposition}[theorem]{Proposition}
\newtheorem{corollary}[theorem]{Corollary}

\newtheorem{definition}[theorem]{Definition}
\DeclareMathOperator{\Tab}{Tab}
\DeclareMathOperator{\wt}{wt}
\DeclareMathOperator{\Cont}{Cont}

\DeclareMathOperator{\GT}{GT}
\DeclareMathOperator{\MGT}{MGT}
\DeclareMathOperator{\SVT}{SVT}

\DeclareMathOperator{\SVCT}{SVCT}
\newcommand\encircle[1]{%
  \tikz[baseline=(X.base)] 
    \node (X) [draw, shape=circle, inner sep=0] {\strut #1};}
\allowdisplaybreaks
\linespread{1.2}
\title[A contratableau model for K-theoretic Littlewood--Richardson rule]{A contratableau model for K-theoretic Littlewood--Richardson rule}
\author[]{Siddheswar Kundu}
\address{Department of Mathematics, Indian Institute of Technology Kanpur, Kanpur 208016, India.}
\email{kundusidhu96@gmail.com}
\keywords{symmetric Grothendieck polynomials, marked Gelfand--Tsetlin patterns, set-valued contratableau, Littlewood--Richardson rule} 
\subjclass[]{05E05}
\begin{document}
\begin{abstract}
The K-theoretic Littlewood--Richardson rule, established by A. Buch, is a combinatorial method for counting the coefficients in the expansion of the product of two symmetric Grothendieck polynomials as a linear combination of symmetric Grothendieck polynomials. In this paper, we provide an explicit combinatorial formula in terms of set-valued contratableaux for the K-theoretic Littlewood--Richardson rule, generalizing the contratableau model for the classical Littlewood--Richardson rule introduced by Carr{\'e}.
\end{abstract}
\maketitle
\section{Introduction}
Grothedieck polynomials were defined by Lascoux and Schützenberger, and they provide formulas for the structural sheaves of the Schubert varieties in a flag variety \cite{Lascoux:G1}. These polynomials were further understood combinatorially by Fomin and Kirillov \cite{Fomin:Yang-Baxter}. They are indexed by permutations in the symmetric group $S_n$, as in the case of Schubert polynomials and when the stable limit of $n \rightarrow \infty$ is taken into account, Grothendieck polynomials become symmetric functions. This paper explores symmetric Grothendieck polynomials $G_{\lambda}$ for a partition $\lambda,$ which are stable Grothendieck polynomials associated to Grassmannian permutation $w_{\lambda},$ (see e.g., \cite[\S2]{Buch:K-LR} for more details about $w_{\lambda}$). Buch \cite{Buch:K-LR} proved the following (see \S\ref{Section 2} for the notations) 
$$G_{\lambda}(\characx)= \displaystyle \sum _{T \in \SVT_n(\lambda)} (-1)^{|T|-|\lambda|}\characx^{\wt(T)}.$$ 
Thus $G_{\lambda}$ can be considered as a K-theory analogue of the Schur function $s_{\lambda}$ as $\{G_{\lambda}(\characx)\}$ indexed by partitions is a basis for (a completion of) the space of symmetric functions, see \cite{Lenart:Comb-aspects}. For $\lambda, \mu, \nu \in \mathcal{P}[n]$, the K-theoretic Littlewood--Richardson coefficients $C^{\nu}_{\lambda,\mu}$ are defined as follows:
$$ G_{\lambda}(\characx)G_{\mu}(\characx)=\displaystyle\sum_{\nu \in \mathcal{P}[n]}(-1)^{|\nu|-|\lambda|-|\mu|} C^{\nu}_{\lambda,\mu} G_{\nu}(\characx).$$
The coefficients $C^{\nu}_{\lambda,\mu}$ are non-zero only if $|\lambda|+ |\mu| \leq |\nu|$. When $|\lambda|+|\mu|=|\nu|,$ the coefficients $C^{\nu}_{\lambda,\mu}$ are the classical Littlewood--Richardson coefficients $c^{\nu}_{\lambda,\mu}$, defined by
$$ s_{\lambda}(\characx)s_{\mu}(\characx)=\displaystyle\sum_{\nu \in \mathcal{P}[n]}c^{\nu}_{\lambda,\mu} s_{\nu}(\characx).$$
Buch \cite[Theorem 5.4]{Buch:K-LR} provided a rule to count the coefficients $C^{\nu}_{\lambda,\mu}$, which reduce to the usual Littlewood--Richardson rule when $|\nu|=|\lambda|+|\mu|$. Buch's rule is further proved in \cite{K-Bender-Knuth} using Bender--Kunth-type involutions. The main theorem of this article is to present a new rule (Theorem \ref{Theorem:main}) for the coefficients $C_{\lambda,\mu}^{\nu}$ involving set-valued contratableaux \S\ref{Section 3}, which extends the similar rule for $c_{\lambda,\mu}^{\nu}$ \cite{Carre:LR}.

In \cite{Knutson-Tao:saturation}, Knutson and Tao gave a proof of the saturation conjecture for $c_{\lambda,\mu}^{\nu}$, utilizing two new characterizations of Berenstein--Zelevinsky polytopes, referred to as honeycomb models and hive models. Then, using the hive model, Buch \cite {Buch:saturation} provided a simple proof of this result. In \cite[Appendix A]{Buch:saturation} Buch gave a simple and direct bijection between the hives with certain boundary, given by partitions $\lambda, \mu, \nu$, and the set of Littlewood--Richardson skew tableaux of shape $\nu/\lambda$ and weight $\mu$. In the proof, it was shown that those hives and the set of all $\mu$-dominant contratableaux of shape $\lambda$ with weight $\nu-\mu$, have the same cardinality. Our approach to prove the main theorem, i.e., Theorem~\ref{Theorem:main}, by extending this idea but without giving a hive model for $C_{\lambda,\mu}^{\nu}$. In \cite[Theorem 5.4]{Buch:K-LR}, it was proved that $C_{\lambda,\mu}^{\nu}$ counts the cardinality of the set $\SVT^{\lambda}_{\nu-\lambda}(\mu),$ which means, the set of all $\lambda$-dominant semi-standard set-valued tableaux of shape $\mu$ with weight $\nu-\lambda$. In this article, we prove $C_{\lambda,\mu}^{\nu}$ is the number of elements in the set $\SVCT^{\mu}_{\nu-\mu}(\lambda),$ namely, the set of all $\mu$-dominant set-valued contratableaux of shape  $\lambda$ with weight $\nu-\mu$. To prove this, we establish a direct bijection between $\SVT^{\lambda}_{\nu-\lambda}(\mu)$ and $\SVCT^{\mu}_{\nu-\mu}(\lambda)$ without recourse to the hive model construction for $C_{\lambda,\mu}^{\nu}$.  
\section{Preliminaries}
\label{Section 2}
\subsection{Partitions and Young diagrams}
We set $\mathbb{Z}_{+} =\{ 0,1,2,\dots \}$ and $\mathbb{N} =\{ 1,2,\dots \}$.

A \emph{partition} $\lambda=(\lambda_1,\ldots,\lambda_l),$ is a non-negative integer sequence such that $\lambda_1 \geq \cdots \geq \lambda_l \geq 0.$ We define the \emph{length} of $\lambda$ to be the smallest integer $r$ such that $\lambda_r > 0$ and $\lambda_{r+1} =0$. We write $r=l(\lambda)$ and $|\lambda|=\lambda_1 +\cdots +\lambda_l $. We set $\mathcal{P}[n]$ as the set of all partitions with length at most $n$. For a partition $\lambda$, the set $\{(i,j)\in \mathbb{N} \times \mathbb{N} : 1\leq i \leq l(\lambda), 1\leq j \leq \lambda_i\}$ is called \emph{Young diagram} of $\lambda$. We use the notation that the Young diagram of $\lambda$ is the diagram obtained by arraying $l$ boxes having $l(\lambda)$ left-justified rows with the $i^{th}$ row consisting $\lambda_i$ boxes. Throughout the paper we often make no distinction between partitions and the corresponding Young diagrams. We say that $\mu \subset \lambda $ if $\mu_i \leq \lambda_i$ for all $i>0$. A \emph{skew Young diagram} $\lambda/\mu$ is defined to be the set-theoretic difference $\lambda - \mu$ of the Young diagrams, where $\mu \subset \lambda$. 
\subsection{Semi-standard set-valued tableau}Let $[n]=\{ 1,2,\dots,n\}$. A \emph{filling} of a skew Young diagram $\lambda/\mu$ is a map from the set of all boxes in $\lambda/\mu$ to the set of non-empty subsets of $[n]$. We define a \emph{semi-standard set-valued tableau} of shape $\lambda/\mu$ to be a filling of the skew Young diagram $\lambda/\mu$, such that the rows are weakly increasing from left to right and the columns are strictly increasing from top to bottom in the following manner: whenever a set $A$ precedes another set $B$ in a row, $\max A \leq \min B$; and whenever $C$ lies below $A$ in a column, $\max A < \min C$. We simply write a set-valued tableau to refer a semi-standard set-valued tableau.

Given a skew diagram $\lambda/\mu ,\SVT_{n}(\lambda/\mu)$ is the set of all set-valued tableaux of shape $\lambda/\mu$ with entries $\leq n$. The \emph{weight} of a set-valued tableau $T \in \SVT_{n}(\lambda/\mu)$, denoted by $\wt(T)$, is the $n$-tuple $(t_1,\ldots,t_n)$ such that $t_i$ is the number of occurrences $i$ in $T$.
\begin{example}  
$T=  \ytableausetup{mathmode,
notabloids,boxsize=1.5em}
 \begin{ytableau}
 \none & \none &1,\!2&2,\!3\\
 \none &1 &3,\!4 \\
 1,\!4 & 4
  \end{ytableau}
 \in \SVT_{4}((4,3,2)/(2,1))$ with $\wt(T)=(3,2,2,3)$.
\end{example}

A \emph{semi-standard Young tableau} of shape $\lambda/\mu$ is a semi-standard set-valued tableau of shape $\lambda/\mu$
%with excess $\mathbf{0}=(0,0,\dots,0) \in \mathbb{Z}^n_{+}$.
where each box of $\lambda /\mu$ is filled by a positive integer in $[n]$.
We let $\Tab_n(\lambda/\mu)$ denote the set of all semi-standard Young tableaux in $\SVT_n(\lambda/\mu)$. 
\subsection{Skew symmetric Grothendieck polynomial}We define the \emph{skew symmetric Grothendieck polynomial} $G_{\lambda/\mu}(\mathbf{x})$ by 
$$ G_{\lambda/\mu}( \mathbf{x}):= \displaystyle \sum _{T \in \SVT_n(\lambda/\mu)} (-1)^{|T|-|\lambda|+|\mu|}\characx^{\wt(T)},$$ where
for $\alpha =(\alpha_1,\alpha_2,\ldots, \alpha_n)\in \mathbb{Z}^n _{+} ,\text{ we let }  \characx^{\alpha}=x_1^{\alpha_1}x_2^{\alpha_2}\cdots x_n^{\alpha_n},$ and $|T|$ is the total number of entries in it.

The lowest degree homogeneous component of $G_{\lambda/\mu}(\mathbf{x})$ is the skew Schur polynomial $s_{\lambda/\mu}(\characx)$, which is defined by 
$$ s_{\lambda/\mu}(\characx):= \displaystyle \sum _{T \in \Tab_n(\lambda/\mu)} \characx^{\wt(T)}.$$
\section{Set-valued contratableau}
\label{Section 3}
In this section we define set-valued contratableau and each set-valued contratableau corresponds to a unique marked Gelfand--Tsetlin (GT) pattern, which are discussed in \cite[\S4.2]{Travis:symmetric}.
\begin{definition}
For a given $\lambda,$ the skew shape $C(\lambda)$ is defined by rotating $Y(\lambda)$ 180 degrees, so that the new diagram has $\lambda_i$ boxes in $i^{th}$ row from the bottom and the rows are right justified. A set-valued contratableau of shape $\lambda$ is a semi-standard set-valued tableau of shape $C (\lambda)$. For $\lambda \in \mathcal{P}[n]$, $\SVCT_m(\lambda)$ is the set of all set-valued contratableaux of shape $\lambda$ with entries $\leq m$.
\end{definition}
\begin{remark}
 A contratableau of shape $\lambda$ is a semi-standard Young tableau of shape $C (\lambda)$, see \cite[\S1]{Carre:LR} \cite[Appendix]{Buch:saturation}. For $\lambda \in \mathcal{P}[n]$, $\Cont_m(\lambda)$ is the set of all contratableau of shape $\lambda$ with entries $\leq m$.
 \end{remark}
\begin{example}
    For $\lambda=(4,2,1), C(\lambda)=$
\ytableausetup{nosmalltableaux}
\begin{ytableau} 
\none & \none & \none & \null \\
\none & \none & \null & \null \\
\null & \null & \null & \null \\
\end{ytableau}
and  
\ytableausetup{mathmode,
notabloids}
\begin{ytableau}
 \none & \none & \none & 1,\!2\\
 \none & \none & 2,\!3&3 \\
  1 & 1,\!3 & 4 &4
\end{ytableau} is a set-valued contratableau of shape $\lambda$.
\end{example}
\subsection{GT patterns} A GT pattern of size $n$ is a triangular array of integers $X=(x_{i,j})_{1\leq j \leq i \leq n}$ (see Figure~\ref{figure:array}) satisfying the ``North-East'' (NE), ''South-East'' (SE) inequalities given below:
\begin{center}
$NE_{i,j}(X)= x_{i,j} - x_{(i-1),j} \geq 0 \quad \quad \quad 1 \leq j < i \leq n,$\\ 
$SE_{i,j}(X)=  x_{(i-1),j} - x_{i,(j+1)} \geq 0 \quad \quad 1 \leq j < i \leq n $.
\end{center}
Given $\lambda \in \mathcal{P}[n],$ we use the notation $\GT_{\mathbb{Z}}(\lambda)$ to denote the set of all GT patterns $X=(x_{i,j})_{1\leq j \leq i \leq n}$ such that
$x_{nj}=\lambda_{j}$ for $1 \leq j \leq n$. Let $X \in \GT_{\mathbb{Z}}(\lambda)$. Then the $i$-tuple $x^{(i)}$ defined by $x^{(i)}:=(x_{i,1},x_{i,2},\ldots, x_{i,i})$ is a partition with $l(x^{(i)}) \leq i$ for $1 \leq i \leq n$. Also, the skew shape $x^{(i)}/x^{(i-1)}$ $(x^{(0)}: = \emptyset)$ is a horizontal strip, i.e., it does not contain a vertical domino. Thus $X \in \GT_{\mathbb{Z}}(\lambda)$ if and only if $x^{(i)}/x^{(i-1)}$ is a horizontal strip for $1 \leq i \leq n$.
\begin{figure}
    \centering
    \begin{tikzpicture}[scale=1]
	\draw (1.5+0.4,1.5*1.732) node {$x_{1,1}$};
	\draw (1+0.3,1.732) node {$x_{2,1}$};
	\draw (2+0.7,1.732) node {$x_{2,2}$};
	\draw (0.5+0.2,0.5*1.732) node {$x_{3,1}$};
	\draw (1.5+0.6,0.5*1.732) node {$x_{3,2}$};
	\draw (2.5+1,0.5*1.732) node {$x_{3,3}$};
	\draw (0,0) node {$x_{4,1}$};
	\draw (1.4,0) node {$x_{4,2}$};
	\draw (2.8,0) node {$x_{4,3}$};
	\draw (4.2,0) node {$x_{4,4}$};
\end{tikzpicture}
    \caption{A Gelfand--Tsetlin array for $n=4$}
    \label{figure:array}
\end{figure}

\begin{example}
\label{example:GT}
An example of a GT pattern in $\GT_{\mathbb{Z}}(4,3,2,0)$ is given below:
$$
\begin{tikzpicture}[scale=1]
	\draw (1.5,1.5*1.732) node {$2$};
	\draw (1,1.732) node {$3$};
	\draw (2,1.732) node {$1$};
	\draw (0.5,0.5*1.732) node {$3$};
	\draw (1.5,0.5*1.732) node {$2$};
	\draw (2.5,0.5*1.732) node {$1$};
	\draw (0,0) node {$4$};
	\draw (1,0) node {$3$};
	\draw (2,0) node {$2$};
	\draw (3,0) node {$0$};
\end{tikzpicture}.
$$
\end{example}
\begin{definition}[{\cite[Definition 4.3]{Travis:symmetric}}]
\label{def:MGT}
A marked GT pattern of size $n$ is a pair $(X,M)$, where $X=(x_{i,j})_{1\leq j \leq i \leq n}$ is a GT pattern of size $n$ together with a set $M$ of entries that are $``\text{marked}"$, where $M$ is a subset of the set $\{ (i,j) : 1 \leq j < i \leq n  \text{ and } SE_{i,j}(X)>0\}$.
Given a GT pattern $X$, $\MGT(X)$ is the set of all marked GT patterns whose corresponding GT pattern $X$, together with $X$. Given $\lambda \in \mathcal{P}[n],$ we define $\MGT_{\mathbb{Z}}(\lambda):=\displaystyle\bigcup_{X \in \GT_{\mathbb{Z}}(\lambda)} \MGT(X)$. Clearly, $\GT_{\mathbb{Z}}(\lambda) \text{ is a subset of }\MGT_{\mathbb{Z}}(\lambda)$.
\end{definition}
\begin{example}
Consider the GT pattern $X$ in Example~\ref{example:GT}. Then $\MGT_{\mathbb{Z}}(X)$ contains the following marked GT patterns:
\begin{center}
\begin{tikzpicture}[scale=0.8]
	\draw (1.5,1.5*1.732) node {$2$};
	\draw (1,1.732) node {$3$};
	\draw (2,1.732) node {$1$};
	\draw (0.5,0.5*1.732) node {$3$};
	\draw (1.5,0.5*1.732) node {$2$};
	\draw (2.5,0.5*1.732) node {$1$};
	\draw (0,0) node {$4$};
	\draw (1,0) node {$3$};
	\draw (2,0) node {$2$};
	\draw (3,0) node {$0$};
\end{tikzpicture}
\begin{tikzpicture}
    \node at (0,0)   {$\null$};
    \node at (0.5,0) {$\null$};
\end{tikzpicture}
\begin{tikzpicture}[scale=0.8]
	\draw (1.5,1.5*1.732) node {$2$};
	\draw (1,1.732) node {$3$};
	\draw (2,1.732) node {$1$};
	\draw (0.5,0.5*1.732) node {$\encircle{3}$};
	\draw (1.5,0.5*1.732) node {$2$};
	\draw (2.5,0.5*1.732) node {$1$};
	\draw (0,0) node {$4$};
	\draw (1,0) node {$3$};
	\draw (2,0) node {$2$};
	\draw (3,0) node {$0$};
\end{tikzpicture}
\begin{tikzpicture}
    \node at (0,0)   {$\null$};
    \node at (0.5,0) {$\null$};
\end{tikzpicture}
\begin{tikzpicture}[scale=0.8]
	\draw (1.5,1.5*1.732) node {$2$};
	\draw (1,1.732) node {$3$};
	\draw (2,1.732) node {$1$};
	\draw (0.5,0.5*1.732) node {$\encircle{3}$};
	\draw (1.5,0.5*1.732) node {$2$};
	\draw (2.5,0.5*1.732) node {$1$};
	\draw (0,0) node {$4$};
	\draw (1,0) node {$3$};
	\draw (2,0) node {$\encircle{2}$};
	\draw (3,0) node {$0$};
\end{tikzpicture}
\begin{tikzpicture}
    \node at (0,0)   {$\null$};
    \node at (0.5,0) {$\null$};
\end{tikzpicture}
\begin{tikzpicture}[scale=0.8]
	\draw (1.5,1.5*1.732) node {$2$};
	\draw (1,1.732) node {$\encircle{3}$};
	\draw (2,1.732) node {$1$};
	\draw (0.5,0.5*1.732) node {$\encircle{3}$};
	\draw (1.5,0.5*1.732) node {$2$};
	\draw (2.5,0.5*1.732) node {$1$};
	\draw (0,0) node {$4$};
	\draw (1,0) node {$3$};
	\draw (2,0) node {$2$};
	\draw (3,0) node {$0$};
\end{tikzpicture}

\begin{tikzpicture}[scale=0.8]
	\draw (1.5,1.5*1.732) node {$2$};
	\draw (1,1.732) node {$\encircle{3}$};
	\draw (2,1.732) node {$1$};
	\draw (0.5,0.5*1.732) node {$3$};
	\draw (1.5,0.5*1.732) node {$2$};
	\draw (2.5,0.5*1.732) node {$1$};
	\draw (0,0) node {$4$};
	\draw (1,0) node {$3$};
	\draw (2,0) node {$2$};
	\draw (3,0) node {$0$};
\end{tikzpicture}
\begin{tikzpicture}
    \node at (0,0)   {$\null$};
    \node at (0.5,0) {$\null$};
\end{tikzpicture}
\begin{tikzpicture}[scale=0.8]
	\draw (1.5,1.5*1.732) node {$2$};
	\draw (1,1.732) node {$\encircle{3}$};
	\draw (2,1.732) node {$1$};
	\draw (0.5,0.5*1.732) node {$3$};
	\draw (1.5,0.5*1.732) node {$2$};
	\draw (2.5,0.5*1.732) node {$1$};
	\draw (0,0) node {$4$};
	\draw (1,0) node {$3$};
	\draw (2,0) node {$\encircle{2}$};
	\draw (3,0) node {$0$};
\end{tikzpicture}
\begin{tikzpicture}
    \node at (0,0)   {$\null$};
    \node at (0.5,0) {$\null$};
\end{tikzpicture}
\begin{tikzpicture}[scale=0.8]
	\draw (1.5,1.5*1.732) node {$2$};
	\draw (1,1.732) node {$3$};
	\draw (2,1.732) node {$1$};
	\draw (0.5,0.5*1.732) node {$3$};
	\draw (1.5,0.5*1.732) node {$2$};
	\draw (2.5,0.5*1.732) node {$1$};
	\draw (0,0) node {$4$};
	\draw (1,0) node {$3$};
	\draw (2,0) node {$\encircle{2}$};
	\draw (3,0) node {$0$};
\end{tikzpicture}
\begin{tikzpicture}
    \node at (0,0)   {$\null$};
    \node at (0.5,0) {$\null$};
\end{tikzpicture}
\begin{tikzpicture}[scale=0.8]
	\draw (1.5,1.5*1.732) node {$2$};
	\draw (1,1.732) node {$\encircle{3}$};
	\draw (2,1.732) node {$1$};
	\draw (0.5,0.5*1.732) node {$\encircle{3}$};
	\draw (1.5,0.5*1.732) node {$2$};
	\draw (2.5,0.5*1.732) node {$1$};
	\draw (0,0) node {$4$};
	\draw (1,0) node {$3$};
	\draw (2,0) node {$\encircle{2}$};
	\draw (3,0) node {$0$};
\end{tikzpicture}.
\end{center}
\end{example}
For $\lambda \in \mathcal{P}[n],$ we recall the bijection $ \Upsilon: \MGT_{\mathbb{Z}}(\lambda) \rightarrow \SVT_n(\lambda)$ in \cite[Proposition 5]{Travis:symmetric}. Let $(X,M) \in \MGT_{\mathbb{Z}}(\lambda)$. We construct $\Upsilon(X,M)$ recursively. Let us assume we have added all the entries $1,2,\dots,i-1$ and the result is $X^{(i-1)}$. Now we fill each box of the horizontal strip $x^{(i)}/x^{(i-1)}$ with an $i$ and add to $X^{(i-1)}$.
In addition, if $(i,j) \in M$ then add an $`i'$ to the rightmost box containing $(i-1)$ in $j^{th}$ row of $ X^{(i-1)}$ and we obtain $X^{(i)}$. We note that this is the unique position where we can add $i$ to the $j^{th}$ row of $X^{(i-1)}$. Finally, we define $\Upsilon(X,M):=X^{(n)}$. Clearly, the process is reversible. Thus we have the following the corollary.
\begin{corollary}
    The bijection $\Upsilon: \MGT_{\mathbb{Z}}(\lambda) \rightarrow \SVT_n(\lambda)$ restricts to a bijection between $\GT_{\mathbb{Z}}(\lambda)$ and $\Tab_n(\lambda)$.
\end{corollary}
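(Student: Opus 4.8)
The plan is to exploit the fact that $\Upsilon$ is already known to be a bijection from $\MGT_{\mathbb{Z}}(\lambda)$ onto $\SVT_n(\lambda)$, so it suffices to identify the preimage of the subset $\Tab_n(\lambda) \subseteq \SVT_n(\lambda)$ and check that it coincides with $\GT_{\mathbb{Z}}(\lambda) \subseteq \MGT_{\mathbb{Z}}(\lambda)$; the restriction of a bijection to a subset together with its image is automatically a bijection onto that image. Under the inclusion $\GT_{\mathbb{Z}}(\lambda) \hookrightarrow \MGT_{\mathbb{Z}}(\lambda)$ a pattern $X$ is identified with the marked pattern $(X,\emptyset)$ having empty marking set, so concretely I would show that $(X,M) \in \GT_{\mathbb{Z}}(\lambda)$, that is $M = \emptyset$, if and only if $\Upsilon(X,M) \in \Tab_n(\lambda)$.

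The key step is an entry count, obtained by tracking $|\Upsilon(X,M)|$ through the recursive construction of $\Upsilon$. At step $i$ each box of the horizontal strip $x^{(i)}/x^{(i-1)}$ receives the single entry $i$, and since every box of the Young diagram of $\lambda$ appears in exactly one such horizontal strip, the strip-filling contributes exactly $|\lambda|$ entries, one per box. Each marking $(i,j) \in M$, by the description of $\Upsilon$, adds precisely one further entry $i$ into the uniquely determined rightmost box of row $j$ containing $i-1$; as distinct markings contribute distinct additional entries, this yields $|\Upsilon(X,M)| = |\lambda| + |M|$. The one point requiring care here is that the marking step always inserts a genuinely new entry rather than relocating an existing one, which is exactly the content of the uniqueness remark following the definition of $\Upsilon$.

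Finally I would combine this with the elementary observation that a set-valued tableau $T$ of shape $\lambda$ lies in $\Tab_n(\lambda)$ if and only if every box is a singleton, equivalently $|T| = |\lambda|$, since the Young diagram of $\lambda$ has exactly $|\lambda|$ boxes and each box of a set-valued tableau is a non-empty subset of $[n]$. Applying this to $T = \Upsilon(X,M)$ gives $\Upsilon(X,M) \in \Tab_n(\lambda) \iff |\lambda| + |M| = |\lambda| \iff |M| = 0 \iff M = \emptyset \iff (X,M) \in \GT_{\mathbb{Z}}(\lambda)$, so $\Upsilon$ carries $\GT_{\mathbb{Z}}(\lambda)$ bijectively onto $\Tab_n(\lambda)$. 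I do not expect a serious obstacle: the only delicate part is the bookkeeping in the entry count, and this is already pinned down by the uniqueness of the insertion position asserted in the construction of $\Upsilon$.
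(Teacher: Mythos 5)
Your proposal is correct and takes essentially the same route as the paper, which derives the corollary directly from the construction of $\Upsilon$: markings are precisely what insert extra entries into already-filled boxes, so the unmarked patterns (those with $M=\emptyset$) correspond exactly to the tableaux in which every box is a singleton. Your count $|\Upsilon(X,M)| = |\lambda| + |M|$ is just an explicit formalization of that observation, which the paper leaves implicit.
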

\begin{example}
Consider the marked GT pattern $(X,M)$ given by:
$$
\begin{tikzpicture}[scale=1]
\draw (1.5,1.5*1.732) node {$2$};
\draw (1,1.732) node {$3$};
\draw (2,1.732) node {$1$};
\draw (0.5,0.5*1.732) node {$\encircle{3}$};
\draw (1.5,0.5*1.732) node {$2$};
\draw (2.5,0.5*1.732) node {$1$};
\draw (0,0) node {$4$};
\draw (1,0) node {$3$};
\draw (2,0) node {$\encircle{2}$};
\draw (3,0) node {$0$};
\end{tikzpicture}.
$$
Then, the sequence $(X^{(1)}, X^{(2)}, X^{(3)}, X^{(4)})$ is given below:
$$
\begin{tikzpicture}[scale=1.2]
  \draw (0,0)--(0,0.5)--(1,0.5)--(1,0)--(0,0);  
  \draw (0.5,0)--(0.5,0.5);
  \draw (0.25,0.25) node {$1$};
  \draw (0.75,0.25) node {$1$};
  \draw (0.5,-0.5) node {$X^{(1)}$};
\end{tikzpicture}
\begin{tikzpicture}
    \node at (0,0)   {$\null$};
    \node at (0.5,0) {$\null$};
\end{tikzpicture}
\begin{tikzpicture}[scale=1.2]
  \draw (0,0)--(0,1)--(1.5,1)--(1.5,0.5)--(0.5,0.5)--(0.5,0)--(0,0);  
  \draw (0,0.5)--(0.5,0.5)--(0.5,1);
  \draw (1,0.5)--(1,1);
  \draw (0.25,0.25) node {$2$};
  \draw (0.25,0.75) node {$1$};
  \draw (0.75,0.75) node {$1$};
  \draw (1.25,0.75) node {$2$};
  \draw (0.5,-0.5) node {$X^{(2)}$};
\end{tikzpicture}
\begin{tikzpicture}
    \node at (0,0)   {$\null$};
    \node at (0.5,0) {$\null$};
\end{tikzpicture}
\begin{tikzpicture}[scale=1.2]
  \draw (0,0)--(0,1.5)--(1.5,1.5)--(1.5,1)--(1,1)--(1,0.5)--(0.5,0.5)--(0.5,0)--(0,0);  
  \draw (0,0.5)--(0.5,0.5)--(0.5,1.5);
  \draw (0,1)--(1.5,1);
  \draw (1,1)--(1,1.5);
  \draw (0.25,0.25) node {$3$};
  \draw (0.25,0.75) node {$2$};
  \draw (0.25,1.25) node {$1$};
  \draw (0.75,0.75) node {$3$};
  \draw (0.75,1.25) node {$1$};
  \draw (1.25,1.25-0.02) node {$ 2,\!3$};
  \draw (0.5,-0.5) node {$X^{(3)}$};
\end{tikzpicture}
\begin{tikzpicture}
    \node at (0,0)   {$\null$};
    \node at (0.5,0) {$\null$};
\end{tikzpicture}
\begin{tikzpicture}[scale=1.2]
  \draw (0,0)--(0,1.5)--(2,1.5)--(2,1)--(1.5,1)--(1.5,0.5)--(1,0.5)--(1,0)--(0,0);  
  \draw (0,0.5)--(1.5,0.5);
  \draw (0,1)--(1.5,1);
  \draw (0.5,0)--(0.5,1.5);
  \draw (1,0.5)--(1,1.5);
  \draw (1.5,1)--(1.5,1.5);
  \draw (0.25,0.25-0.03) node {$3,\!4$};
  \draw (0.25,0.75) node {$2$};
  \draw (0.25,1.25) node {$1$};
  \draw (0.75,0.25) node {$4$};
  \draw (0.75,0.75) node {$3$};
  \draw (0.75,1.25) node {$1$};
  \draw (1.25,0.75) node {$4$};
  \draw (1.25,1.25-0.02) node {$2,\!3$};
  \draw (1.75,1.25) node {$4$};
  \draw (0.5,-0.5) node {$X^{(4)}$};
\end{tikzpicture}.
$$
Therefore we obtain $\Upsilon(X,M)=X^{(4)}$.
In the same way, we have:
$$
\begin{tikzpicture}
\draw (1.5,1.5*1.732) node {$2$};
\draw (1,1.732) node {$3$};
\draw (2,1.732) node {$1$};
\draw (0.5,0.5*1.732) node {$3$};
\draw (1.5,0.5*1.732) node {$2$};
\draw (2.5,0.5*1.732) node {$1$};
\draw (0,0) node {$4$};
\draw (1,0) node {$3$};
\draw (2,0) node {$2$};
\draw (3,0) node {$0$};  
\end{tikzpicture}
\begin{tikzpicture}
    \draw (0,0) node {\null};
    \draw[|->] (0.5,1.5) -- (1.5,1.5);
    \draw (1,1.75) node {$\Upsilon$};
    \draw (2.5,1.5) node {\null};
\end{tikzpicture}
\begin{tikzpicture}
    \draw (0,0)--(0,2.4)--(3.2,2.4);
    \draw (0,1.6)--(3.2,1.6);
    \draw (0,0.8)--(2.4,0.8);
    \draw (0,0)--(1.6,0);
    \draw (0.8,0)--(0.8,2.4);
    \draw (1.6,0)--(1.6,2.4);
    \draw (2.4,0.8)--(2.4,2.4);
    \draw (3.2,1.6)--(3.2,2.4);
    \node at (0.4,0.4) {$3$};
    \node at (1.2,0.4) {$4$};
    \node at (0.4,1.2) {$2$};
    \node at (1.2,1.2) {$3$};
    \node at (2,1.2) {$4$};
    \node at (0.4,2) {$1$};
    \node at (1.2,2) {$1$};
    \node at (2,2) {$2$};
    \node at (2.8,2) {$4$};
\end{tikzpicture}.
$$
\end{example}
Let $\lambda,\mu \in \mathcal{P}[n]$ be such that $\mu \subset \lambda$. Then the diagram $C(\lambda) / C(\mu)$ is obtained by removing the boxes of $C(\mu)$ from those of $C(\lambda)$. For $\lambda=(4,3,1), \mu=(2,1),$ $C(\lambda) / C(\mu)$ is the following diagram (omitting the diagram in yellow):
\begin{center}
\ytableausetup{nosmalltableaux,boxsize=0.6 cm}
\begin{ytableau} 
\none & \none & \none & \null  \\
\none & \null & \null & *(yellow) \null\\
\null & \null & *(yellow) \null & *(yellow) \null  \\
\end{ytableau}.
\end{center}
Now given $(Y,M) \in \MGT_{\mathbb{Z}}(\lambda)$, we construct an element in $\SVCT_n(\lambda)$ recursively. Suppose we have added all the entries $n,n-1,\ldots,n+1-(i-1)$ and the result is $Y^{(i-1)}$. Then we fill each box in $C(y^{(i)})/C(y^{(i-1)})$ (which obviously does not contain a vertical domino) by $n+1-i$ and add to $Y^{(i-1)}$. In addition, if $(i,j) \in M$ then we add an $n+1-i$ to the leftmost box containing $n+2-i$ in $j^{th}$ row from bottom of $Y^{(i-1)}$ and we obtain $Y^{(i)}$. Finally we get $Y^{(n)} \in \SVCT_n(\lambda)$. It is evident that this procedure can be reversed. So we obtain the following proposition.
\begin{proposition}
\label{proposition:contretableau}
    The map $\Omega : \MGT_{\mathbb{Z}}(\lambda) \rightarrow \SVCT_n(\lambda)$ defined by $(Y,M) \mapsto Y^{(n)}$ is a bijection.
\end{proposition}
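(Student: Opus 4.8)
I would realize $\Omega$ as the composition of the already-established bijection $\Upsilon$ with an elementary ``rotate-and-complement'' bijection between ordinary and contra- set-valued tableaux, thereby reducing the proposition to two routine verifications. The cleanest route is to introduce the map
$\Phi \colon \SVT_n(\lambda) \to \SVCT_n(\lambda)$ that rotates a tableau $180$ degrees and replaces every entry by its complement, i.e.\ sends a set $S \subseteq [n]$ filling a box to $\{\, n+1-k : k \in S \,\}$ filling the rotated box, and then to prove the identity $\Omega = \Phi \circ \Upsilon$.

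First I would check that $\Phi$ is a well-defined bijection. Since $C(\lambda)$ is by definition the $180$-degree rotation of $Y(\lambda)$, the shape is correct. To see that $\Phi$ lands in $\SVCT_n(\lambda)$, I would note that complementation is an order-reversing involution on non-empty subsets of $[n]$: writing $S' = \{\, n+1-k : k \in S \,\}$ one has $\min S' = n+1 - \max S$ and $\max S' = n+1 - \min S$, so the relation $A \le B$ becomes $B' \le A'$ and $A < B$ becomes $B' < A'$. Because the rotation simultaneously reverses the left-to-right and the top-to-bottom order of boxes, rows that were weakly increasing stay weakly increasing and columns that were strictly increasing stay strictly increasing; hence $\Phi(T) \in \SVCT_n(\lambda)$. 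As $k \mapsto n+1-k$ is an involution and a second rotation undoes the first, $\Phi$ is its own two-sided inverse, so it is a bijection.

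The heart of the argument is the identity $\Omega = \Phi \circ \Upsilon$, which I would prove by induction on the recursion step, using the same pattern $(Y,M)$ in both constructions. The non-marked part matches immediately: under $\Phi$ the horizontal strip $y^{(i)}/y^{(i-1)}$ filled with $i$ in $\Upsilon$ is carried exactly onto the rotated strip $C(y^{(i)})/C(y^{(i-1)})$ filled with $n+1-i$ in $\Omega$, where one uses that $\mu \subseteq \nu$ implies $C(\mu) \subseteq C(\nu)$ and that $C(\nu)/C(\mu)$ is the $180$-degree rotation of $\nu/\mu$ (so it is again a horizontal strip). For the marked part, a pair $(i,j) \in M$ instructs $\Upsilon$ to append $i$ to the rightmost box containing $i-1$ in the $j$-th row from the top, while it instructs $\Omega$ to append $n+1-i$ to the leftmost box containing $n+2-i$ in the $j$-th row from the bottom; since the rotation interchanges ``rightmost/leftmost'' and ``row $j$ from the top'' with ``row $j$ from the bottom'', and complementation sends $i-1 \mapsto n+2-i$ and $i \mapsto n+1-i$, these two operations correspond under $\Phi$. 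This confirms $\Omega = \Phi \circ \Upsilon$, and since both factors are bijections, so is $\Omega$.

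I expect the main obstacle to be the bookkeeping in the marked step: one must verify that the ``leftmost box containing $n+2-i$ in row $j$ from the bottom'' is indeed the unique legal position and that it is precisely the $\Phi$-image of the ``rightmost box containing $i-1$ in row $j$ from the top'', which rests on the uniqueness statement already recorded for $\Upsilon$. A fully self-contained alternative, avoiding $\Phi$ altogether, is to exhibit the inverse of $\Omega$ directly, reading $Y^{(n)} \in \SVCT_n(\lambda)$ from its largest entry downward: the cells whose maximum entry is at least $n+1-i$ form $C(y^{(i)})$, which recovers $Y$, and a pair $(i,j)$ belongs to $M$ exactly when the $j$-th row from the bottom contains a box holding both $n+1-i$ and $n+2-i$, which recovers $M$. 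This reading is manifestly inverse to the recursive construction, giving the bijectivity of $\Omega$ as claimed.
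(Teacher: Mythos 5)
Your proposal is correct, but it proves the proposition by a genuinely different route than the paper. The paper defines $\Omega$ by running the $\Upsilon$-type recursion in mirror image (filling $C(y^{(i)})/C(y^{(i-1)})$ with $n+1-i$, inserting marked entries into leftmost boxes) and then simply asserts that ``the procedure can be reversed''; in effect, the paper's proof is your \emph{fallback} argument — the direct inverse read off from the largest entries downward — stated without the details you supply. Your primary argument instead factors $\Omega = \Phi \circ \Upsilon$, where $\Phi$ is the rotate-$180$-degrees-and-complement map $S \mapsto \{n+1-k : k \in S\}$, and outsources all bijectivity to the already-cited result for $\Upsilon$ together with the elementary observation that complementation is order-reversing, so $\Phi$ preserves semistandardness on the rotated shape. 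This factorization buys several things the paper leaves implicit: it makes the uniqueness of the marked insertion position for $\Omega$ a consequence of the corresponding uniqueness for $\Upsilon$ rather than a fact to re-verify; it immediately explains the remark following the proposition that $\wt(\Omega(X,M))$ is the reversal of $\wt(\Upsilon(X,M))$, since complementation reverses weights; and it exposes the structural relation between $\Upsilon$ and $\Omega$ that the paper exploits silently in the proof of the main theorem. The one piece of bookkeeping you rightly flag — that $C(\mu) \subseteq C(\lambda)$ shares the bottom-right corner, so rotation carries $x^{(i)}/x^{(i-1)}$ onto $C(x^{(i)})/C(x^{(i-1)})$ step by step — is exactly what makes the induction on the recursion go through, and your treatment of it is sound.
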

\begin{corollary}
   The bijection $\Omega: \MGT_{\mathbb{Z}}(\lambda) \rightarrow \SVCT_n(\lambda)$ restricts to a bijection between $\GT_{\mathbb{Z}}(\lambda)$ and $\Cont_n(\lambda)$. 
\end{corollary}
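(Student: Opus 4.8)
The plan is to exploit that $\Omega$ is already known to be a bijection, so that proving the restriction statement reduces to identifying the image of $\GT_{\mathbb{Z}}(\lambda)$: once I show $\Omega(\GT_{\mathbb{Z}}(\lambda))=\Cont_n(\lambda)$, injectivity of the restriction is inherited from $\Omega$ for free. Here I use that $\GT_{\mathbb{Z}}(\lambda)$ is exactly the subset of $\MGT_{\mathbb{Z}}(\lambda)$ consisting of the pairs $(Y,\emptyset)$ with empty mark set, and that $\Cont_n(\lambda)$ is exactly the subset of $\SVCT_n(\lambda)$ consisting of those fillings in which every box carries a single entry. The argument runs parallel to the corollary already recorded for $\Upsilon$.

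The key step is to read off from the recursive construction how $|M|$ controls the total number of entries of $Y^{(n)}=\Omega(Y,M)$. At step $i$ the unmarked part of the construction places a single $n+1-i$ in each box of the horizontal strip $C(y^{(i)})/C(y^{(i-1)})$; since $y^{(0)}\subseteq y^{(1)}\subseteq\cdots\subseteq y^{(n)}=\lambda$ and $C$ preserves containment, these strips partition $C(\lambda)$ and contribute exactly one entry to each of its $|\lambda|$ boxes. Each pair $(i,j)\in M$ then inserts precisely one \emph{additional} $n+1-i$ into an already-occupied box of the $j$-th row. Granting this, the total entry count is $|Y^{(n)}|=|\lambda|+|M|$.

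From this identity the corollary is immediate. If $(Y,M)\in\GT_{\mathbb{Z}}(\lambda)$ then $M=\emptyset$, so $|Y^{(n)}|=|\lambda|$; as all $|\lambda|$ boxes of $C(\lambda)$ are filled, each holds exactly one entry and hence $Y^{(n)}\in\Cont_n(\lambda)$. Conversely, if $\Omega(Y,M)\in\Cont_n(\lambda)$ then every box is a singleton, so $|Y^{(n)}|=|\lambda|$, which forces $|M|=0$ and thus $(Y,M)\in\GT_{\mathbb{Z}}(\lambda)$. Therefore $\Omega$ maps $\GT_{\mathbb{Z}}(\lambda)$ bijectively onto $\Cont_n(\lambda)$.

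The one point deserving genuine care---and the step I expect to be the main obstacle---is the claim that each mark contributes a \emph{new} entry rather than overwriting an existing one, which is exactly what underlies the count $|Y^{(n)}|=|\lambda|+|M|$. I would verify it directly from the definition of $\MGT_{\mathbb{Z}}(\lambda)$: a pair $(i,j)$ may be marked only when $SE_{i,j}(Y)>0$, and this strict inequality is precisely the condition guaranteeing that the $j$-th row of $Y^{(i-1)}$ contains a box holding $n+2-i$ into which the extra $n+1-i$ can be legitimately inserted, just as in the uniqueness remark recorded for $\Upsilon$. Since the whole construction is reversible, this also shows distinct marks act on distinct boxes, so no additional entry is double-counted and the identity is exact.
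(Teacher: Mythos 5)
Your proof is correct and matches the paper's (implicit) reasoning: the paper states this corollary without proof as an immediate consequence of the construction of $\Omega$, namely that marks are exactly what produce multi-entry boxes, so $M=\emptyset$ corresponds precisely to all-singleton fillings. Your counting identity $|\Omega(Y,M)|=|\lambda|+|M|$, justified by the observation that each mark inserts a genuinely new entry into an already-occupied box, is just a careful formalization of that same observation.
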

\begin{remark}
We can describe the bijection $\Omega$ as follows: Let $(Y,M) \in \MGT_{\mathbb{Z}}(\lambda)$. Then $\Upsilon(Y,M)\in \SVT_n(\lambda)$. Next, we rotate $\Upsilon(Y,M)$ by 180 degrees and replace each $i$ with $n+1-i$ and finally obtain $\Omega(Y,M)$.
\end{remark}
\begin{example}
Consider the marked GT pattern $(Y,M)$ given below:
$$
\begin{tikzpicture}[scale=0.9]
	\draw (1.5,1.5*1.732) node {$2$};
	\draw (1,1.732) node {$\encircle{3}$};
	\draw (2,1.732) node {$1$};
	\draw (0.5,0.5*1.732) node {$\encircle{3}$};
	\draw (1.5,0.5*1.732) node {$2$};
	\draw (2.5,0.5*1.732) node {$1$};
	\draw (0,0) node {$4$};
	\draw (1,0) node {$3$};
	\draw (2,0) node {$\encircle{2}$};
	\draw (3,0) node {$0$};
\end{tikzpicture}.
$$
Then, the sequence $(Y^{(1)}, Y^{(2)}, Y^{(3)}, Y^{(4)})$ is given below:
$$
\begin{tikzpicture}[scale=1.2]
  \draw (0,0)--(0,0.5)--(1,0.5)--(1,0)--(0,0);  
  \draw (0.5,0)--(0.5,0.5);
  \draw (0.25,0.25) node {$4$};
  \draw (0.75,0.25) node {$4$};
  \draw (0.5,-0.5) node {$Y^{(1)}$};
\end{tikzpicture}
\begin{tikzpicture}
    \node at (0,0)   {$\null$};
    \node at (0.5,0) {$\null$};
\end{tikzpicture}
\begin{tikzpicture}[scale=1.2]
  \draw (0,0)--(0,0.5)--(1,0.5)--(1,0)--(0,0);  
  \draw (0.5,0)--(0.5,0.5);
  \draw (1,0.5)--(1,1)--(1.5,1)--(1.5,0)--(1,0);
  \draw (1,0.5)--(1.5,0.5);
  \draw (0.25,0.25) node {$3$};
  \draw (0.75,0.25-0.02) node {$3,\!4$};
  \draw (1.25,0.25) node {$4$};
  \draw (1.25,0.75) node {$3$};
  \draw (0.5+0.3,-0.5) node {$Y^{(2)}$};
\end{tikzpicture}
\begin{tikzpicture}
    \node at (0,0)   {$\null$};
    \node at (0.5,0) {$\null$};
\end{tikzpicture}
\begin{tikzpicture}[scale=1.2]
  \draw (0,0)--(0,0.5)--(1,0.5)--(1,0)--(0,0);  
  \draw (0.5,0)--(0.5,0.5);
  \draw (1,0.5)--(1,1)--(1.5,1)--(1.5,0)--(1,0);
  \draw (1,0.5)--(1.5,0.5);
  \draw (0.5,0.5)--(0.5,1)--(1,1)--(1,1.5)--(1.5,1.5)--(1.5,1);
  \draw (0.25,0.25-0.02) node {$2,\!3$};
  \draw (0.75,0.25-0.02) node {$3,\!4$};
  \draw (1.25,0.25) node {$4$};
  \draw (1.25,0.75) node {$3$};
  \draw (0.75,0.75) node {$2$};
  \draw (1.25,1.25) node {$2$};
  \draw (0.5+0.3,-0.5) node {$Y^{(3)}$};
\end{tikzpicture}
\begin{tikzpicture}
    \node at (0,0)   {$\null$};
    \node at (0.5,0) {$\null$};
\end{tikzpicture}
\begin{tikzpicture}[scale=1.2]
  \draw (0,0)--(0,0.5)--(1,0.5)--(1,0)--(0,0);  
  \draw (0.5,0)--(0.5,0.5);
  \draw (1,0.5)--(1,1)--(1.5,1)--(1.5,0)--(1,0);
  \draw (1,0.5)--(1.5,0.5);
  \draw (0.5,0.5)--(0.5,1)--(1,1)--(1,1.5)--(1.5,1.5)--(1.5,1);
  \draw (0,0)--(-0.5,0)--(-0.5,0.5)--(0,0.5)--(0,1)--(0.5,1)--(0.5,1.5)--(1,1.5);
  \draw (-0.25,0.25) node {$1$};
  \draw (0.25,0.25-0.02) node {$2,\!3$};
  \draw (0.75,0.25-0.02) node {$3,\!4$};
  \draw (1.25,0.25) node {$4$};
  \draw (1.25,0.75) node {$3$};
  \draw (0.25,0.75) node {$1$};
  \draw (0.75,0.75) node {$2$};
  \draw (0.75,1.25) node {$1$};
  \draw (1.25,1.25-0.02) node {$1,\!2$};
  \draw (0.75,-0.5) node {$Y^{(4)}$};
\end{tikzpicture}.
$$
Thus, we have $\Omega(Y,M)=Y^{(4)}$.
Similarly, we obtain the following:
$$
%\text{and}
\begin{tikzpicture}[scale=0.9]
	\draw (1.5,1.5*1.732) node {$2$};
	\draw (1,1.732) node {$3$};
	\draw (2,1.732) node {$1$};
	\draw (0.5,0.5*1.732) node {$3$};
	\draw (1.5,0.5*1.732) node {$2$};
	\draw (2.5,0.5*1.732) node {$1$};
	\draw (0,0) node {$4$};
	\draw (1,0) node {$3$};
	\draw (2,0) node {$2$};
	\draw (3,0) node {$0$};
\end{tikzpicture}
\begin{tikzpicture}
\draw (0,0) node {\null};
\draw[|->] (0.5-0.2,1.5) -- (1.5-0.2,1.5);
\draw (1-0.2,1.75) node {$\Omega$};
\draw (2.5-0.7,1.5) node {\null};
\end{tikzpicture}
\begin{tikzpicture}[scale=1.2]
  \draw (0,0)--(0,0.5)--(1,0.5)--(1,0)--(0,0);  
  \draw (0.5,0)--(0.5,0.5);
  \draw (1,0.5)--(1,1)--(1.5,1)--(1.5,0)--(1,0);
  \draw (1,0.5)--(1.5,0.5);
  \draw (0.5,0.5)--(0.5,1)--(1,1)--(1,1.5)--(1.5,1.5)--(1.5,1);
  \draw (0,0)--(-0.5,0)--(-0.5,0.5)--(0,0.5)--(0,1)--(0.5,1)--(0.5,1.5)--(1,1.5);
  \draw (-0.25,0.25) node {$1$};
  \draw (0.25,0.25) node {$3$};
  \draw (0.75,0.25) node {$4$};
  \draw (1.25,0.25) node {$4$};
  \draw (1.25,0.75) node {$3$};
  \draw (0.25,0.75) node {$1$};
  \draw (0.75,0.75) node {$2$};
  \draw (0.75,1.25) node {$1$};
  \draw (1.25,1.25) node {$2$};
\end{tikzpicture}.
$$
\end{example}
\begin{remark}
For $(X,M) \in \MGT_{\mathbb{Z}}(\lambda)$, if $\wt(\Upsilon(X,M)) = (\alpha_1,\alpha_2,\ldots,\alpha_n)$, then $\wt(\Omega(X,M)) = (\alpha_n,\alpha_{n-1},\ldots,\alpha_1)$.
\end{remark}
\section{proof of the main theorem}
\label{Section 4}
In this section, we prove our main theorem (Theorem~\ref{Theorem:main}).
\begin{definition}
 The \emph{column word} $c(T)$ of a set-valued tableau $T$ of skew shape $\theta$ is the word obtained by reading each column of $T$, starting from the rightmost column, according to the following process, and then moving to the left. In each column, we read the entries from top to bottom and within each cell we read the entries in decreasing order.
\end{definition}
\begin{definition}
 The \emph{row word} $r(T)$ of a set-valued tableau $T$ of skew shape $\theta$ is the word obtained by reading each row of $T$, starting from the top row, according to the following procedure, and then continuing down the rows. In each row, we read the entries from right to left and within each cell we read the entries in decreasing order.
\end{definition}
\begin{example}
\label{example:reading word}
$T=  \ytableausetup{mathmode,
notabloids,boxsize=2.5em}
  \begin{ytableau}
    1&1,\!2&2,\!3\\2,\!3,\!4&4   
  \end{ytableau}
$ is a set-valued tableau of shape $(3,2,0)$ with $c(T)=322141432$ and $r(T)=322114432$.
\end{example}
\begin{definition}
    A word $u=u_1u_2 \cdots u_s$ is said to be \emph{dominant word}~\cite[\S5.2]{Fulton:yt} if for all $t \geq 1,$ the number of $i'$s in $ u_1u_2 \cdots u_t$ is at least the number of $(i+1)'$s in it for all $i \geq 1$.
    Also, the word $u$ is said to \emph{$\lambda$-dominant} if the concatenation word $r(T_{\lambda})*u$ is dominant, where $T_{\lambda}$ is the unique semi-standard Young tableau of shape and weight both equals to $\lambda$. Furthermore,
    A set-valued tableau $T$ of skew shape $\theta$ is said to be \emph{$\lambda$-dominant} if $r(T)$ is $\lambda$-dominant.
    
\end{definition}
\begin{example}
The tableau in Example~\ref{example:reading word} is $(4,2,1)$-dominant but not $(3,1)$-dominant. 
\end{example}
Using similar argument in \cite[Proposition 9.5]{Mrigendra:adv:arxiv}, we can state the following.
\begin{proposition}
    Let $T$ be a set-valued tableau of any skew shape and $\lambda$ be a partition. Then $r(T)$ is $\lambda$-dominant if and only if $c(T)$ is $\lambda$-dominant.
\end{proposition}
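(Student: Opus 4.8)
The plan is to reduce the statement to a family of two-letter ``ballot'' conditions and then to analyze those conditions directly, rather than via plactic equivalence (under which the prefix-dominance condition is \emph{not} in general preserved, so any proof must use the specific relation between $r(T)$ and $c(T)$).

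First I would record the following reduction. For a word $w$ and an index $i\ge 1$, let $w^{(i)}$ be the subword of $w$ obtained by deleting every letter different from $i$ and $i+1$. Since the defining inequality ``$\#i\ge \#(i+1)$ in every prefix'' involves only the two letters $i,i+1$, the word $w$ is dominant if and only if, for every $i$, every prefix of $w^{(i)}$ has at least as many $i$'s as $(i+1)$'s. For $\lambda$-dominance one prepends $r(T_\lambda)=1^{\lambda_1}2^{\lambda_2}\cdots$, whose $\{i,i+1\}$-subword is $i^{\lambda_i}(i+1)^{\lambda_{i+1}}$; this is itself ballot and leaves a nonnegative credit $d_i:=\lambda_i-\lambda_{i+1}\ge 0$. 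Hence $w$ is $\lambda$-dominant if and only if, for each $i$, the subword $w^{(i)}$ is \emph{$d_i$-ballot}, meaning every prefix satisfies $\#i+d_i\ge \#(i+1)$. Applying this to $w=r(T)$ and to $w=c(T)$, and noting that the initial credit $d_i$ is the same in both cases, it suffices to prove, for each fixed $i$, that $r(T)^{(i)}$ is $d$-ballot if and only if $c(T)^{(i)}$ is $d$-ballot, where $d=d_i$.

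Next I would make the two-letter picture explicit. Only cells meeting $\{i,i+1\}$ contribute, each contributing its intersection with $\{i,i+1\}$ read in decreasing order; in particular a cell containing both $i$ and $i+1$ contributes the block $(i{+}1)\,i$, reading the larger letter first. Semistandardness forces a rigid local structure: strict column increase means each column contains at most one cell meeting $i$ and at most one meeting $i+1$, with the $i$-cell weakly above; weak row increase means in each row the cells meeting $i$ lie weakly left of those meeting $i+1$. Thus $r(T)^{(i)}$ and $c(T)^{(i)}$ are the row- and column-reading words of a semistandard set-valued tableau on the alphabet $\{i,i+1\}$, and the proposition is reduced to this two-letter case. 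In either reading the $d$-ballot condition can fail only immediately after an $(i+1)$ is read, so everything is controlled by the maximal deficit $\#(i+1)-\#i$ over all prefixes: the reading word is $d$-ballot if and only if this maximum is at most $d$. For prefixes ending at a cell boundary the set of read cells forms an up--right closed (``staircase'') region $R$, and the deficit equals $\#\{i{+}1\in R\}-\#\{i\in R\}$; such regions occur as prefixes of both readings. The only other prefixes end inside a both-cell, just after its $i+1$, and contribute the deficit of the preceding boundary region plus one.

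The main obstacle is to show that these two maximal deficits coincide, and the difficulty is exactly the set-valued feature just isolated: a cell containing both $i$ and $i+1$ is read $(i{+}1)\,i$, so a prefix can acquire an $i+1$ while its partner $i$ is still unread, and this half-read configuration sits above a different boundary region in the row reading than in the column reading. I expect to resolve this by showing that the maxima dominate one another: for every prefix of $r(T)^{(i)}$ there is a prefix of $c(T)^{(i)}$ of deficit at least as large, and symmetrically, using the rigid two-letter structure (in each column the $i$ lies weakly above the $i+1$, and in each row the $i$'s lie weakly left of the $i+1$'s) to slide the half-read prefix to a matching one in the other reading without decreasing its deficit. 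Since the two maxima are then equal, $r(T)^{(i)}$ is $d$-ballot if and only if $c(T)^{(i)}$ is; combined with the first paragraph this proves that $r(T)$ is $\lambda$-dominant if and only if $c(T)$ is. This matching of half-read prefixes is the technical heart and is the precise analogue of the argument of \cite[Proposition 9.5]{Mrigendra:adv:arxiv}.
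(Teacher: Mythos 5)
Your reduction to the alphabet $\{i,i+1\}$ and the reformulation of $\lambda$-dominance via maximal prefix deficits are both correct, and you have located the right difficulty (cells read as $(i{+}1)\,i$); but the proposal stops exactly where the content of the proposition lies. The matching of prefixes between the two readings is announced (``I expect to resolve this by showing\dots''), not carried out, and --- this is the genuine gap --- it cannot be carried out from the structure your reduction retains. Consider the configuration with one cell at position $(1,1)$ containing $i$, one cell at $(2,2)$ containing $i+1$, and no other cells meeting $\{i,i+1\}$. It satisfies every local fact you list (at most one $i$-cell and one $(i{+}1)$-cell per column with the $i$-cell weakly above; $i$-cells weakly left of $(i{+}1)$-cells in each row --- all vacuously here). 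Yet the row reading gives the word $i,\,i{+}1$, with maximal prefix deficit $0$, while the column reading gives $i{+}1,\,i$, with maximal prefix deficit $1$; so with $d=\lambda_i-\lambda_{i+1}=0$ the row word is $d$-ballot and the column word is not. Any sliding or matching argument that uses only your listed relations would have to apply to this configuration, and hence must fail.

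The proposition survives because this configuration is not realizable as the restriction of an actual set-valued skew tableau: if $i\in T(1,1)$ and $i{+}1\in T(2,2)$, then the cell $(1,2)$ necessarily belongs to the skew shape, and semistandardness forces $\min T(1,2)\ge \max T(1,1)\ge i$ together with $\max T(1,2)<\min T(2,2)\le i+1$, whence $T(1,2)=\{i\}$; this forced extra $i$ is what restores equality of the two maxima. The information used here --- the entries of the discarded cells and the geometry of the ambient shape --- is precisely what your reduction throws away: the cells of $T$ meeting $\{i,i+1\}$ do \emph{not} form a skew-shape set-valued tableau (there are gaps), so your assertion that the restriction ``is a semistandard set-valued tableau on the alphabet $\{i,i+1\}$,'' and with it the claim that the proposition ``is reduced to this two-letter case,'' is where the proof breaks. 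A correct argument must carry along the non-local constraints that realizability imposes on the surviving cells (for instance: an $(i{+}1)$-cell having an $i$-cell strictly to its northwest must have an $i$-cell directly above it in its own column). Note finally that the paper itself offers no proof of this proposition --- it appeals to the analogous argument of \cite[Proposition 9.5]{Mrigendra:adv:arxiv} --- so the step you left as a sketch is exactly the step that needs to be written.
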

We now define $\SVT^{\lambda}_{\nu-\lambda}(\mu)$ to be the set of all $\lambda$-dominant semi-standard set-valued tableaux $T$ of shape $\mu$ such that $ \wt(T)= \nu-\lambda$. Then Theorem 5.4 in \cite{Buch:K-LR} can be stated as follows:
\begin{theorem}
$C_{\lambda,\mu}^{\nu}$ is the cardinality of the set $\SVT^{\lambda}_{\nu-\lambda}(\mu)$.
\end{theorem}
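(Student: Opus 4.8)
The plan is to read the statement off Buch's Theorem~5.4 \cite{Buch:K-LR} by matching the objects it counts with the $\lambda$-dominant set-valued tableaux of shape $\mu$ and weight $\nu-\lambda$. The guiding picture is the classical one: when $|\nu|=|\lambda|+|\mu|$ the coefficient $c^{\nu}_{\lambda,\mu}$ is computed equally well by Littlewood--Richardson skew fillings and by the $\lambda$-dominant tableaux of shape $\mu$ and weight $\nu-\lambda$, which is precisely the reformulation behind Carr\'e's contratableaux \cite{Carre:LR} and the bijection in \cite[Appendix A]{Buch:saturation}. I would therefore argue that this classical equivalence survives once ordinary fillings are replaced by set-valued ones, with the reading words and the notion of $\lambda$-dominance as set up in Section~\ref{Section 3}.

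Concretely, I would proceed in three steps. First, rewrite the condition defining the objects of Buch's rule using the reading conventions of this paper; this is legitimate because of the preceding proposition, which says that $r(T)$ is $\lambda$-dominant if and only if $c(T)$ is, so that a reverse-lattice (ballot) hypothesis may be tested in whichever direction is convenient. Second, use the symmetry $C^{\nu}_{\lambda,\mu}=C^{\nu}_{\mu,\lambda}$ to put the fixed shape on $\mu$ and the growing weight on $\nu-\lambda$, which is exactly the shape and weight appearing in $\SVT^{\lambda}_{\nu-\lambda}(\mu)$. Third, set up the correspondence itself: given $T\in\SVT^{\lambda}_{\nu-\lambda}(\mu)$, the $\lambda$-dominance of $r(T_{\lambda})\ast r(T)$ allows the letters of $r(T)$ to be adjoined to $\lambda$ one at a time through a chain of partitions $\lambda=\rho^{(0)}\subseteq\rho^{(1)}\subseteq\cdots\subseteq\rho^{(|\nu|-|\lambda|)}=\nu$, and recording this growth recovers the object counted by Buch; the process is reversible because $r(T)$ can be read back off the record. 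Tracking weights shows each of the $|\nu|-|\lambda|$ entries of $T$ corresponds to one adjoined box, so that the excess $|\nu|-|\lambda|-|\mu|$ measuring set-valuedness is the same on both sides and the sign $(-1)^{|\nu|-|\lambda|-|\mu|}$ is respected.

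The step I expect to be the main obstacle is the behaviour of the surplus, set-valued entries. In the one-entry-per-cell case the equivalence is the standard Littlewood--Richardson fact, but here a cell of $T$ may contain several integers, read in decreasing order; I must check that reading cells this way makes the prefix (ballot) test and the $\lambda$-dominance test agree letter by letter, that the intermediate shapes $\rho^{(i)}$ remain partitions --- this is where the horizontal-strip structure underlying set-valued tableaux (equivalently, the marked GT pattern of the previous section) and the $\lambda$-dominance hypothesis must be used together --- and that the correspondence is weight-preserving. Once these compatibilities are established the map is a bijection, and hence $|\SVT^{\lambda}_{\nu-\lambda}(\mu)|=C^{\nu}_{\lambda,\mu}$.
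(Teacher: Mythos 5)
The theorem you are asked to prove is not given an independent proof in the paper: it is presented as a direct restatement of Buch's Theorem 5.4 in \cite{Buch:K-LR}. The only bridging ingredients the paper needs are (i) the reading-word proposition ($r(T)$ is $\lambda$-dominant if and only if $c(T)$ is), which reconciles the paper's row-word convention with the word convention used by Buch --- this is your first step, correctly identified --- and (ii) the observation that Buch's objects are set-valued tableaux on the \emph{single} glued skew shape $\lambda * \mu$ (the diagrams of $\lambda$ and $\mu$ attached corner to corner) with content $\nu$ whose word is a reverse lattice word; the lattice condition forces the portion of such a tableau lying on $\lambda$ to be the superstandard tableau $T_{\lambda}$, and what remains on $\mu$ is then precisely a $\lambda$-dominant set-valued tableau of weight $\nu-\lambda$, i.e.\ an element of $\SVT^{\lambda}_{\nu-\lambda}(\mu)$. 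That gluing observation is the entire content of the restatement; no symmetry $C^{\nu}_{\lambda,\mu}=C^{\nu}_{\mu,\lambda}$ and no growth bijection is required.

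The genuine gap is in your third step. You treat ``the object counted by Buch'' as a set-valued analogue of a Littlewood--Richardson skew filling of $\nu/\lambda$ of content $\mu$, to be recovered by recording the chain $\lambda=\rho^{(0)}\subseteq\cdots\subseteq\rho^{(|\nu|-|\lambda|)}=\nu$. That is not what Buch's Theorem 5.4 counts, and in fact no such object exists on that side of your intended bijection: a set-valued filling of $\nu/\lambda$ has at least $|\nu|-|\lambda|$ entries (every box is nonempty), so it can have content $\mu$ only when $|\mu|\geq|\nu|-|\lambda|$, which together with the constraint $|\nu|\geq|\lambda|+|\mu|$ forces the classical case $|\nu|=|\lambda|+|\mu|$ with one entry per box. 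This is exactly why Buch formulates his rule on the shape $\lambda*\mu$ rather than on $\nu/\lambda$. Your growth record is instead a single-valued filling of $\nu/\lambda$ whose content is the row-weight of $T$ (which strictly exceeds $\mu$ in the genuinely set-valued case); characterizing such fillings and proving they are equinumerous with $\SVT^{\lambda}_{\nu-\lambda}(\mu)$ is a substantial theorem in its own right (in the spirit of the later genomic-tableau rules), not a reformulation of Buch's result. So as written, your argument either proves a different statement or presupposes a rule that Theorem 5.4 does not provide. The repair is to discard the growth construction and replace it with the gluing observation above; the sign bookkeeping in your second paragraph is also irrelevant here, since the theorem is purely a statement about cardinalities and the signs are already built into the definition of $C^{\nu}_{\lambda,\mu}$.
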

The following theorem is the main theorem in this article.
\begin{theorem}
\label{Theorem:main}
    The coefficient $C^{\nu}_{\lambda, \mu}$ is the number of all $\mu$-dominant set-valued contratableaux of shape $\lambda$ with weight $\nu-\mu$.  
\end{theorem}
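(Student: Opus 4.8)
The plan is to prove Theorem~\ref{Theorem:main} by promoting Buch's theorem above to an explicit bijection, in the spirit of the degree--zero correspondence of \cite[Appendix A]{Buch:saturation} but with set--valued objects in place of ordinary ones and with no hive model. Write $\mathcal{A}$ for the set of $\mu$--dominant elements of $\SVCT_n(\lambda)$ of weight $\nu-\mu$, whose cardinality is to be identified with $C^\nu_{\lambda,\mu}$. By Buch's theorem this amounts to constructing a bijection
\[
\beta:\ \SVT^{\lambda}_{\nu-\lambda}(\mu)\ \longrightarrow\ \mathcal{A},
\]
that is, a correspondence that simultaneously exchanges the roles of $\lambda$ and $\mu$, replaces the straight shape $\mu$ by the contra shape $\lambda$, and sends weight $\nu-\lambda$ to weight $\nu-\mu$. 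Its degree--zero instance (when $|\nu|=|\lambda|+|\mu|$, so all tableaux are ordinary) is precisely the classical contratableau rule of Carr\'e \cite{Carre:LR}, equivalently the hive--free bijection of \cite[Appendix A]{Buch:saturation}, between $\lambda$--dominant semistandard tableaux of shape $\mu$ and $\mu$--dominant contratableaux of shape $\lambda$.

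To handle the set--valued structure I would factor $\beta$ through the marked Gelfand--Tsetlin encodings of Section~\ref{Section 3}. Using $\Upsilon^{-1}$ on $\SVT_n(\mu)$ and $\Omega$ on $\SVCT_n(\lambda)$, an element of $\SVT^{\lambda}_{\nu-\lambda}(\mu)$ becomes a marked pattern $(X,M)\in\MGT_{\mathbb{Z}}(\mu)$ and an element of $\mathcal{A}$ becomes a marked pattern $(Y,M')\in\MGT_{\mathbb{Z}}(\lambda)$, the underlying patterns (with empty mark set) being ordinary (contra)tableaux by the two Corollaries of Section~\ref{Section 3} and the marks recording exactly the extra $K$--theoretic entries. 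Thus $\beta$ is reduced to two tasks: (i) a bijection between the underlying Gelfand--Tsetlin patterns realizing the classical $\lambda\leftrightarrow\mu$ symmetry, and (ii) a compatible transport of the mark sets $M\mapsto M'$.

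For (i) I would re--derive the degree--zero bijection using reading words rather than hives: the Proposition that the row word and the column word of a set--valued tableau are $\lambda$--dominant together allows one to test the reverse--lattice condition on whichever word is convenient, and the weight--reversal remark at the end of Section~\ref{Section 3} reconciles the fact that $\Omega$ reads the weight of a pattern in the order opposite to $\Upsilon$; together these translate $\lambda$--dominance of the shape--$\mu$ side and $\mu$--dominance of the shape--$\lambda$ side into matching systems of inequalities on the patterns. For (ii) I would use that marks are permitted only at positions with a \emph{strict} $SE$ inequality, and argue that the bijection of (i) carries strict $SE$ steps to strict $SE$ steps; the marks then transport canonically and the total number of entries $|T|$, hence the sign $(-1)^{|T|-|\lambda|}$ and the degree, is preserved.

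The hard part will be (i) together with its compatibility with (ii): proving that the reverse--lattice (dominance) conditions are genuinely preserved under the shape-- and weight--changing correspondence, while at the same time the markable (strict $SE$) positions are matched. In the degree--zero case this is exactly the content of \cite[Appendix A]{Buch:saturation}; the novelty and the main obstacle here is to perform it in the presence of marks, so that the ``uncrowding'' of a set--valued object on the $\mu$--side and on the $\lambda$--side are intertwined by $\beta$ box by box. Once dominance--matching and mark--transport are established, invertibility of $\beta$ is immediate from that of $\Upsilon$, $\Omega$ and the underlying classical bijection, giving $|\mathcal{A}|=|\SVT^{\lambda}_{\nu-\lambda}(\mu)|=C^\nu_{\lambda,\mu}$, which is Theorem~\ref{Theorem:main}.
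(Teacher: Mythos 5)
Your overall strategy --- identify $C^{\nu}_{\lambda,\mu}$ with $|\SVT^{\lambda}_{\nu-\lambda}(\mu)|$ via Buch's theorem and then construct a bijection onto the $\mu$-dominant set-valued contratableaux of shape $\lambda$ and weight $\nu-\mu$, routed through the marked Gelfand--Tsetlin encodings $\Upsilon$ and $\Omega$ --- is exactly the paper's. The gap is in your proposed factorization into ``(i) the classical $\lambda\leftrightarrow\mu$ bijection on the underlying GT patterns, plus (ii) a canonical transport of the mark sets'': step (i) is not well defined on the objects at hand, because the underlying (mark-stripped) pattern of a dominant marked pattern need not be dominant. Concretely, take $\lambda=(2,1,1)$, $\mu=(2,1)$, and let $T$ be the set-valued tableau of shape $\mu$ whose first row has cells $\{1\},\{1,2\}$ and whose second row has cell $\{3\}$. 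Then $r(T_{\lambda})*r(T)=1,1,2,3,2,1,1,3$ is dominant, so $T\in\SVT^{\lambda}_{\nu-\lambda}(\mu)$ with $\nu=(5,2,2)$, and $\Upsilon^{-1}(T)=(X,M)$ has the single mark $M=\{(2,1)\}$. Stripping the mark leaves the ordinary tableau with rows $\{1\},\{1\}$ and $\{3\}$, whose concatenated word $1,1,2,3,1,1,3$ violates dominance ($\#3>\#2$ at the end); its weight added to $\lambda$ gives $(4,1,2)$, which is not even a partition. So the underlying pattern $X$ lies outside the domain of any degree-zero dominance-preserving correspondence, and there is no single classical bijection (no fixed target weight) to invoke in (i): dominance and weight of a set-valued tableau are properties of the pattern and the marks \emph{jointly}, and cannot be tested on the pattern alone.

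The paper avoids this by making the pattern-level map itself mark-dependent rather than factoring it. Given $T$, the target pattern is built from cumulative row counts of the full set-valued tableau, $y_{i,j}=\N_{n-i+j,\,n-i}$, where $\N_{i,k}$ counts \emph{all} occurrences of $i$ (marked entries included) in $T_{\lambda},T_1,\dots,T_k$; only the mark set is transported by pure re-indexing, $M'_T=\{(n+1-j,\,i-j):(i,j)\in M_T\}$. In the reverse (surjectivity) direction the marks again modify the pattern: one forms $\partial_{SE}(Z')$ and then subtracts $1$ along the column segments indexed by the marks (the operators $T_{(i,j)_n}$) before recognizing the result as an element of $\GT_{\mathbb{Z}}(\mu)$. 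In other words, the coupling of marks and pattern that your plan postpones to step (ii) must already enter the definition of the map in step (i); with the factorization as you state it, the construction cannot get off the ground, so this is a wrong decomposition rather than merely the ``hard part'' left open.
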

\begin{proof}
First, we define $\SVCT^{\mu}_{\nu-\mu}(\lambda)$ to be the set of all $\mu$-dominant set-valued contratableaux of shape $\lambda$ satisfying $\wt(S)= \nu-\mu.$ Our approach to prove Theorem~\ref{Theorem:main} is to produce a bijection between the sets $\SVT^{\lambda}_{\nu-\lambda}(\mu)$ and $\SVCT^{\mu}_{\nu-\mu}(\lambda)$.

Let $T \in \SVT^{\lambda}_{\nu-\lambda}(\mu)$ and $\Upsilon^{-1}(T)=(X_T,M_T)$, where $X_T=(x_{i,j})_{1 \leq j \leq i \leq n} \in \GT_{\mathbb{Z}}(\mu)$. Also, let $T_i$ be the $i^{th}$ row of $T$ from the top. For $ 1\leq i \leq j \leq n, 1 \leq k \leq l \leq n ,$ we define 
$$ p^j_i(T_k^l):= \text{the number of occurrences of } i,i+1,\dots,j \text{ appearing in }T_k,T_{k+1},\dots,T_l.$$
For every $1 \leq i \leq n , 0 \leq k \leq i,$ we define 
\begin{equation}
\label{eq:nik}
%   \N_{i,0}(T):= \lambda_i; \N_{i,k}(T):= \lambda_i + p^i_i(T_1^k) \text{ for }k \geq 1.
   \N_{i,k}(T) = \left\{ 
    \begin{array}{ll}
        \lambda_i + p^i_i(T_1^k) & \quad \text{if} \quad k \geq 1,  \\
        \lambda_i & \quad \text{if} \quad k=0. 
    \end{array}  \right. 
\end{equation}
That is to say, $\N_{i,k}(T)$ is the sum of $\lambda_i$ and the number of occurrences of the entry $i$ in $T_1,T_2,\dots,T_k$ for $k \geq 1$. We now verify that $T$ is $\lambda$-dominant if and only if
\begin{equation}
\label{eq:domi}
 \N_{i,k}(T) \geq \N_{i+1,k+1}(T),    
\end{equation}
for $ 1 \leq i <n , 0 \leq k \leq i.$

First, we assume $\N_{i,k}(T) \geq \N_{i+1,k+1}(T)$ for $ 1 \leq i <n, 0 \leq k \leq i$. Let $u=r(T_{\lambda})* r(T),$ which we write as the sequence $u_1\cdots u_s u_{s+1} \cdots u_t$. To show $T$ is $\lambda$-dominant, we have to prove $(\alpha^s_1,\dots, \alpha^s_n)$ is a partition for each $1 \leq s \leq t,$ where $\alpha^s_j$ is the number of occurrences of the entry $j$ in $u_1\dots u_s$ for $j \geq 1$. Suppose that the rightmost occurrence of the entry $i+1$ of $u_1\dots u_s$ is contained in $T_l$. Then using \eqref{eq:domi}, we obtain
$$ \alpha^s_i \geq \N_{i,l-1}(T) \geq \N_{i+1, l}(T) \geq \alpha^s_{i+1}.$$ The first inequality, $\alpha^s_i \geq \N_{i,l-1}(T) $ follows from the fact that $\alpha^s_i$ is the sum of $\N_{i,l-1}(T)$ and the number of occurrences of the entry $i$ in $u_1\dots u_s$ that contained in $T_l,\dots,T_{l'}$ such that $u_s \in T_{l'}$ and the last inequality follows immediately. 

Conversely, let $T$ be $\lambda$-dominant. We fix $i,k$ with $1 \leq i <n , 0 \leq k \leq i.$ Suppose $i+1$ does not appear in $T_1,\dots,T_{k+1},$ then $$\N_{i,k}(T) \geq \N_{i,0}(T)=\lambda_i \geq \lambda_{i+1} = \N_{i+1,0}(T)=\N_{i+1,k+1}(T).$$ 
Otherwise, choose the largest index $r \in \{1,2,\dots, k+1\}$ such that $T_r$ contains an $i+1$, then we have
$$\N_{i,k}(T) \geq \N_{i,r-1}(T) \geq \N_{i+1,r}(T)=\N_{i+1,k+1}(T).$$
Consider the triangular array $Y_T=(y_{i,j})_{1 \leq j \leq i \leq n},$ where
\begin{equation}
    \label{eq:yij}
    y_{i,j}:=\N_{n-i+j,n-i}(T).
\end{equation}
Since $1 \leq j \leq i ,$ it follows that
$$1-i \leq j-i \leq 0.$$
Adding $n$ to each part gives
$$ n+1-i \leq n+j-i \leq n.$$
Finally, using $1 \leq i \leq n,$ we conclude that
$$1 \leq n-i+j \leq n.$$
Therefore the array $Y_{T}$ is well-defined. Now we show that $Y_T \in \GT_{\mathbb{Z}}(\lambda)$.
Since $T$ is $\lambda$-dominant, it follows from \eqref{eq:domi} that
$$NE_{i,j}(Y_T)=y_{i,j}-y_{(i-1),j}= \N_{n-i+j,n-i}(T)-\N_{n-i+j+1,n-i+1}(T) \geq
0.$$
Also, we have
$$ SE_{i,j}(Y_T)=y_{(i-1),j}-y_{i, j+1} =\N_{n-i+j+1,n-i+1}(T)-\N_{n-i+j+1,n-i}(T).$$
Using \eqref{eq:nik}, we see that $SE_{i,j}(Y_T)$ counts the number of occurrences of the entry $n-i+j+1$ in $T_{n-i+1},$ and hence it is non-negative. It is also immediate that $y_{n,j}=\N_{j,0}(T)=\lambda_j \forall j$. Thus, $Y_T \in \GT_{\mathbb{Z}}(\lambda)$.

Let $(i,j) \in M_T$. Then $SE_{n-j+1,i-j}(Y_T)=y_{n-j,i-j}-y_{n-j+1,i-j+1}=\N _{i,j}(T)-N_{i,(j-1)}(T)$. Thus, $SE_{n-j+1,i-j}(Y_T)$ represents the number of occurrences of $i$ in $j^{th}$ row of $T$. Hence, $$SE_{n-j+1,i-j}(Y_T) > 0,$$ since $(i,j) \in M_T$. We now define
\begin{equation}
\label{eq:m't}
M'_T:=\{ (n-j+1,i-j) : (i,j) \in M_T \}.
\end{equation} So $(Y_T,M'_T)\in {\MGT_{\mathbb{Z}}(\lambda)}$. Let $\Tilde{T}=\Omega(Y_T,M_T')$. Our aim is to show that $\Tilde{T} \in \SVCT_{\nu-\mu}^{\mu}(\lambda)$.

Let $\Tilde{T}_i^{\uparrow}$ be the $i^{th}$ row of $\Tilde{T}$ from the bottom. Then for $1 \leq i \leq n, 1\leq k \leq n-i+1$, we define
\begin{equation}
    \label{eq:nik2}
    \N_{i,k}^{\uparrow}(\Tilde{T}) := \mu_i +\text{ the numbers of occurrences of }i \text{ in }\Tilde{T}_k^{\uparrow},\Tilde{T}_{k+1}^{\uparrow},\dots,\Tilde{T}_n^{\uparrow}.
\end{equation}
Since $\Tilde{T}_k^{\uparrow}=\Tilde{T}_{n-k+1},$ the $(n-k+1)^{th}$ row of $\Tilde{T}$ from the top, we can express $\N^{\uparrow}_{i,k}(\Tilde{T})$ as the sum of $\mu_{i}$ and the number of occurrences of the entry $i$ in $\Tilde{T}_1,\dots,\Tilde{T}_{n-k+1}$. It then follows directly from \eqref{eq:domi} that $\Tilde{T}$ is $\mu$-dominant if and only if 
\begin{equation}
    \label{eq:uparrow}
\N^{\uparrow}_{i-1,k+1}(\Tilde{T}) \geq \N^{\uparrow}_{i,k}(\Tilde{T}),
\end{equation}
for $1< i \leq n, 1 \leq k \leq n-i+1$. We now introduce the following definition: for $1 \leq i < n, 1 \leq k \leq n-i,$ 
$$M'_{i,k}:=\{ (n-i+1,k), (n-i+1,k+1),\dots,(n-i+1,n-i) \},$$
and for $1 \leq i \leq n, k=n-i+1,$
$$M'_{i,k}:=\emptyset.$$
Unless otherwise specified, $\N_{i,k}$ and $ \N_{i,k}^{\uparrow}$ serve as shorthand for $\N_{i,k}(T)$ and $ \N_{i,k}^{\uparrow}(\Tilde{T})$ respectively.

We recall that $\Tilde{T}_i^{\uparrow}$ denotes the $i^{th}$ row of $\Tilde{T}$ indexed from the bottom. We then define $\Tilde{T}_{p,[q',q]}^{\uparrow}$ to be the number of occurrences of the entry $p$ within the rows $\Tilde{T}_{q'}^{\uparrow},\Tilde{T}_{q'+1}^{\uparrow},\dots,\Tilde{T}_q^{\uparrow}.$ Due to semi-standard property of the set-valued contratableau $\Tilde{T},$ each entry $j$ can appear only in $\Tilde{T}_{1}^{\uparrow},\Tilde{T}_{2}^{\uparrow},\dots,\Tilde{T}_{n-j+1}^{\uparrow}.$ Thus $\Tilde{T}_{j,[l,l]}^{\uparrow} =0 $ for $l > n-j+1.$ We note that $(n-j+1,n-j+1) \notin M'_T,$ as any $(i',j') \in M'_T$ must satisfy $i' >j'$ (see Definition~\ref{def:MGT}). Defining $y_{n-i,n-i+1}:=0,$ we have
\begin{equation}
    \label{eq:uparrow-0}
\Tilde{T}_{i,[l,l]}^{\uparrow}= (y_{n-i+1,l}-y_{n-i,l}) + |\{(n-i+1,l)\} \cap M'_T|,
\end{equation}
for $1 \leq l \leq n-i+1$. Here $|\{(n-i+1,l)\} \cap M'_T|$ denotes the cardinality of the set $\{(n-i+1,l)\} \cap M'_T$.

Since $\Tilde{T}_{i,[k,n]}^{\uparrow}=\Tilde{T}_{i,[k,k]}^{\uparrow}+\Tilde{T}_{i,[k+1,k+1]}^{\uparrow}+\cdots+\Tilde{T}_{i,[n-i+1,n-i+1]}^{\uparrow},$ and $\N^{\uparrow}_{i,k}=\mu_i + \Tilde{T}_{i,[k,n]}^{\uparrow},$
it follows from \eqref{eq:uparrow-0} that $$\N_{i,k}^{\uparrow}=\mu_i + (y_{n-i+1,k}-y_{n-i,k})+(y_{n-i+1,k+1}-y_{n-i,k+1})+\cdots +(y_{n-i+1,n-i+1}-0) + |M'_{i,k} \cap M'_T|.$$
Next, we define 
\begin{equation}
    \label{eq:mik} 
 M_{i,k} = \left\{ 
    \begin{array}{ll}\{ (k+i,i),(k+i+1,i),\dots, (n,i) \} & \quad \text{if} \quad 1\leq i < n, 1 \leq k \leq n-i,  \\
    \emptyset & \quad \text{if} \quad 1 \leq i \leq n, k=n-i+1. 
    \end{array} \right.
\end{equation}
Then using \eqref{eq:m't} we can conclude that for $1\leq i < n, 1 \leq k \leq n-i ,$
$$ (n-i+1,t) \in M'_{i,k}\cap M'_T  \text{ if and only if }  (t+i,i) \in M_{i,k}\cap M_T,$$
for $ k \leq t \leq n-i$. Thus we obtain $ |M'_{i,k}\cap M'_T|= |M_{i,k}\cap M_T|$ for $1 \leq i \leq n, 1\leq k \leq n-i+1 $. It then follows from \eqref{eq:yij} that
$$\N_{i,k}^{\uparrow}=\mu_i + (\N_{k+i-1,i-1}-\N_{k+i,i})+(\N_{k+i,i-1}-\N_{k+i+1,i})+\cdots +(\N_{n,i-1}-0) + |M_{i,k}\cap M_T|.$$
Thus,
\begin{equation}
 \label{eq:nik-arrow}
 \N_{i,k}^{\uparrow}=\mu_i + (\N_{k+i-1,i-1}+\N_{k+i,i-1}+ \cdots + \N_{n,i-1})-(\N_{k+i,i}+\N_{k+i+1,i}+\cdots +\N_{n,i})+ |M_{i,k}\cap M_T|.
\end{equation}
It is evident that
\begin{equation}
    \label{eq: mu}
    \mu_i=p^n_1(T_i^i) - |M_{i,1}\cap M_T|.
\end{equation}
Also, using \eqref{eq:nik}, we obtain
\begin{equation*}    
 \N_{k+i-1,i-1}+ \cdots + \N_{n,i-1}=( \lambda_{k+i-1}+p_{k+i-1}^{k+i-1}(T_1^{i-1})) +\cdots + (\lambda_n +p_{n}^n(T_1^{i-1})).   
\end{equation*}
Thus we get 
\begin{equation}
\label{eq:A}
     \N_{k+i-1,i-1}+\N_{k+i,i-1}+ \cdots + \N_{n,i-1} =\lambda_{k+i-1}+\lambda_{k+i}+\cdots +\lambda_n+
p_{k+i-1}^n(T_1^{i-1}).
\end{equation}
Similarly, applying \eqref{eq:nik}, we also have
\begin{equation}
    \label{eq:B}
    \N_{k+i,i}+\N_{k+i+1,i}+\cdots +\N_{n,i}=\lambda_{k+i}+\lambda_{k+i+1}+\cdots +\lambda_n + p_{k+i}^n(T_1^{i}).
\end{equation}
Now applying \eqref{eq: mu}, \eqref{eq:A} and \eqref{eq:B} in \eqref{eq:nik-arrow}, we have

$\N_{i,k}^{\uparrow}= p_1^n(T_i^i) - |M_{i,1}\cap M_T| +(\lambda_{k+i-1}+\cdots +\lambda_n)+
p_{k+i-1}^n(T_1^{i-1})-(\lambda_{k+i}+\cdots +\lambda_n) - p_{k+i}^n(T_1^{i}) + |M_{i,k}\cap M_T|.$  

We note the following equalities
\begin{equation}
\label{eq:a}
   p_1^n(T_i^i)=p_1^{k+i-1}(T_i^i)+ p_{k+i}^n(T_i^i), 
\end{equation}
\begin{equation}
\label{eq:b}
    p_{k+i-1}^n(T_1^{i-1})= p_{k+i-1}^{k+i-1}(T_1^{i-1}) + p_{k+i}^n(T_1^{i-1}), 
\end{equation}
\begin{equation}
\label{eq:c}
    p_{k+i}^n(T_1^{i})= p_{k+i}^n(T_1^{i-1})+ p_{k+i}^n(T_i^{i}).
\end{equation}
Applying \eqref{eq:a}, \eqref{eq:b} and \eqref{eq:c} in the above expression of $\N_{i,k}^{\uparrow}$ yields
\begin{equation}
    \label{eq:nik4}
\N_{i,k}^{\uparrow}=\lambda_{k+i-1} + 
p_{1}^{k+i-1}(T_i^{i})+
p_{k+i-1}^{k+i-1}(T_1^{i-1})- |M_{i,1}\cap M_T| +|M_{i,k}\cap M_T|.
\end{equation}
We now compute $\N_{i-1,k+1}^{\uparrow}-\N_{i,k}^{\uparrow}$ using \eqref{eq:nik4} and establish that $\N^{\uparrow}_{i-1,k+1} \geq \N^{\uparrow}_{i,k}$ for $1< i \leq n$ and $ 1 \leq k \leq n-i+1,$ thereby showing that $\Tilde{T}$ is $\mu$-dominant (by \eqref{eq:uparrow}).

$\N_{i-1,k+1}^{\uparrow}-\N_{i,k}^{\uparrow}=p_{1}^{k+i-1}(T_{i-1}^{i-1})+
p_{k+i-1}^{k+i-1}(T_1^{i-2})- |M_{i-1,1}\cap M_T| +|M_{i-1,k+1}\cap M_T|-
p_{1}^{k+i-1}(T_i^{i})-
p_{k+i-1}^{k+i-1}(T_1^{i-1})+ |M_{i,1}\cap M_T| - |M_{i,k}\cap M_T|.$

Additionally, we note that
\begin{equation}
    \label{eq:x}
p_{1}^{k+i-1}(T_{i-1}^{i-1})=p_{1}^{k+i-2}(T_{i-1}^{i-1})+p_{k+i-1}^{k+i-1}(T_{i-1}^{i-1}),  
\end{equation}
\begin{equation}
    \label{eq:y}
p_{k+i-1}^{k+i-1}(T_1^{i-2})=p_{k+i-1}^{k+i-1}(T_1^{i-1})-p_{k+i-1}^{k+i-1}(T_{i-1}^{i-1}).  \end{equation}
Applying \eqref{eq:x} and \eqref{eq:y} in the above expression of $ \N_{i-1,k+1}^{\uparrow}-\N_{i,k}^{\uparrow},$ we get

$\N_{i-1,k+1}^{\uparrow}-\N_{i,k}^{\uparrow}=\Bigl\{p_{1}^{k+i-2}(T_{i-1}^{i-1}) 
-|M_{i-1,1}\cap M_T| +|M_{i-1,k+1}\cap M_T| \Bigl\}- \Bigl\{ p_{1}^{k+i-1}(T_i^i) -|M_{i,1}\cap M_T| +|M_{i,k}\cap M_T| \Bigl\}.$

We note that $p_1^i(T_j^j)=x_{i,j} +|\{ (j+1,j),(j+2,j),\dots, (i,j)\} \cap M_T |.$ Hence using \eqref{eq:mik}, we have
$$x_{i,j}= p_1^i(T_j^j)-|M_{j,1} \cap M_T| +|M_{j,i+1-j} \cap M_T|.$$
Thus
\begin{equation}
    \label{eq:p}
x_{k+i-2,i-1} = p_{1}^{k+i-2}(T_{i-1}^{i-1}) 
-|M_{i-1,1}\cap M_T| +|M_{i-1,k}\cap M_T|, 
\end{equation}
\begin{equation}
    \label{eq:q}
x_{k+i-1,i} = p_{1}^{k+i-1}(T_{i}^{i}) 
-|M_{i,1}\cap M_T| +|M_{i,k}\cap M_T|. 
\end{equation}
Since $1< i \leq n$ and $ 1 \leq k \leq n-i+1,$ so by \eqref{eq:mik}, we have
$$M_{i-1,k}=\{ (k+i-1,i-1),(k+i,i-1),\dots, (n,i-1)\},$$
\begin{equation*}
 M_{i-1,k+1} = \left\{ 
    \begin{array}{ll}\{ (k+i,i-1),\dots, (n,i-1) \} & \quad \text{if} \quad 1 < i \leq n, 1 \leq k \leq n-i,  \\
    \emptyset & \quad \text{if} \quad 1 < i \leq n, k=n-i+1. 
    \end{array} \right.
\end{equation*}
Then the following is evident
\begin{equation}
\label{eq:r}
|M_{i-1,k} \cap M_T| = \left\{ 
    \begin{array}{ll}|M_{i-1,k+1}\cap M_T| +1 & \quad \text{if} \quad (k+i-1,i-1) \in M_{T},  \\
    |M_{i-1,k+1}\cap M_T| & \quad \text{elsewhere}. 
    \end{array} \right. 
\end{equation}
Applying \eqref{eq:r} in \eqref{eq:p} and assuming $c_{i,k}= p_{1}^{k+i-2}(T_{i-1}^{i-1})-|M_{i-1,1}\cap M_T| +|M_{i-1,k+1} \cap M_T| $, we get
\begin{equation}
\label{eq:U}
x_{k+i-2,i-1} = \left\{ 
    \begin{array}{ll}  
c_{i,k}+1 & \quad \text{if} \quad (k+i-1,i-1) \in M_{T},  \\
     c_{i,k}  & \quad \text{elsewhere}. 
    \end{array} \right. 
\end{equation}
Applying \eqref{eq:q} and \eqref{eq:U} in the last expression of $ \N_{i-1,k+1}^{\uparrow}-\N_{i,k}^{\uparrow},$ we get
\begin{equation*}
 \N_{i-1,k+1}^{\uparrow}(\Tilde{T})-\N_{i,k}^{\uparrow}(\Tilde{T}) = \left\{ 
    \begin{array}{ll}
        x_{k+i-2,i-1}-x_{k+i-1,i}-1 & \quad \text{if} \quad (k+i-1,i-1) \in M_{T},  \\
        x_{k+i-2,i-1}-x_{k+i-1,i} & \quad \text{elsewhere}. 
    \end{array}   \right. 
\end{equation*}
Since $SE_{k+i-1,i-1}(X_T)=x_{k+i-2,i-1}-x_{k+i-1,i},$ by Definition~\ref{def:MGT}, we have
\begin{equation*}
 SE_{k+i-1,i-1}(X_T) \geq \left\{ 
    \begin{array}{ll}
        1 & \quad \text{if} \quad (k+i-1,i-1) \in M_{T},  \\
        0 & \quad \text{elsewhere}. 
    \end{array}   \right. 
\end{equation*}
Thus we conclude $\N_{i-1,k+1}^{\uparrow}(\Tilde{T}) \geq \N_{i,k}^{\uparrow}(\Tilde{T})$ for $1< i \leq n, 1 \leq k \leq n+1-i$. This proves $\Tilde{T}$ is $\mu$-dominant (by \eqref{eq:uparrow}). Next we verify that $\wt(\Tilde{T})=\nu-\mu$. Using \eqref{eq:nik4}, we obtain $$\N_{i,1}^{\uparrow}(\Tilde{T})= \lambda_i + p_{1}^{i}(T_i^i)+
p_{i}^{i}(T_1^{i-1}).$$
Since $T$ is a semi-standard set-valued tableau, the entries $1,2,\dots,i-1$ cannot appear in $T_i,$ the $i^{th}$ row of $T$ from the top. Thus $p_{1}^{i}(T_i^i)=p_{i}^{i}(T_i^i)$. Hence we get
$$\N_{i,1}^{\uparrow}(\Tilde{T})= \lambda_i + p_{i}^{i}(T_i^i)+
p_{i}^{i}(T_1^{i-1}) =\lambda_i +p_i^i(T_1^i) = \nu_i.$$
The last equality in the above expression holds since $\wt(T)=\nu-\lambda$ and
$p_i^i(T_1^i)$ counts the total occurrences of the entry $i$ in $T_1,\dots,T_i,$ i.e., in $T$.
Also, using \eqref{eq:nik2},
$$\N_{i,1}^{\uparrow}(\Tilde{T})= \mu_i +\text{ the number of occurrences of }i \text{ in }\Tilde{T}_1,\Tilde{T}_{2},\dots,\Tilde{T}_n.$$
i.e.,
$$\N_{i,1}^{\uparrow}(\Tilde{T})= \mu_i +\text{ the number of occurrences of }i \text{ in }\Tilde{T}.$$
Therefore, the number of occurrences of the entry $i$ in $\Tilde{T}$ is $\nu_i - \mu_i$. Hence,
$\wt(\Tilde{T})=\nu-\mu$, which implies $\Tilde{T} \in \SVCT_{\nu-\mu}^{\mu}(\lambda)$.

Now we define the following map
\begin{equation}
    \label{eq:Gamma}
\Gamma : \SVT^{\lambda}_{\nu-\lambda}(\mu) \rightarrow \SVCT_{\nu-\mu}^{\mu}(\lambda) \text{ by }\Gamma(T)=\Omega(Y_T,M_T')=\Tilde{T}.  
\end{equation}
Our target now is to show $\Gamma$ is a bijection. First, we verify that $\Gamma$ is injective. Let $\Gamma(T)=\Gamma(T'),$ i.e., $ \Tilde{T}=\Tilde{T'}$. Also, let $\Upsilon^{-1}(T)=(X_T,M_T),\Upsilon^{-1}(T')=(X_{T'},M_{T'})$ and $\Omega^{-1}(\Tilde{T})=(Y_T,M'_T),\Omega^{-1}(\Tilde{T'})=(Y_{T'},M'_{T'})$, where $X_T=(x_{i,j})_{1 \leq j \leq i \leq n},X_{T'}=(x'_{i,j})_{1 \leq j \leq i \leq n}$ and $Y_T=(y_{i,j})_{1 \leq j \leq i \leq n},Y_{T'}=(y'_{i,j})_{1 \leq j \leq i \leq n}$. We will show that $x_{i,j}=x'_{i,j} $ for $1 \leq j \leq i \leq n$ and $M_{T}=M_{T'}.$ Since $M'_T=M'_{T'},$ it follows from \eqref{eq:m't} that $M_{T}=M_{T'}$.
Then it is clear that 
\begin{equation}
     \label{eq:mij}
    p_i^i(T_j^j)=x_{i,j}-x_{(i-1),j} + |\{(i,j)\} \cap M_T|. 
\end{equation}
By hypothesis, $y_{n-1,j}=y'_{n-1,j} \forall j \in \{ 1, \ldots, n-1\}$. Now, using \eqref{eq:yij}, we get $y_{n-1,j}=\N_{j+1,1}(T)$. Then using \eqref{eq:nik},
$$ \lambda_{j+1}+p_{j+1}^{j+1}(T_1^1)=\lambda_{j+1}+p_{j+1}^{j+1}(T'^1_1).$$
Applying \eqref{eq:mij},
$$\lambda_{j+1} + x_{j+1,1}-x_{j,1} + |\{(j+1,1)\} \cap M_T| =\lambda_{j+1} + x'_{j+1,1}-x'_{j,1} + |\{(j+1,1)\} \cap M_{T'}|.$$
Since $M_{T}=M_{T'},$
\begin{equation}
    \label{eq:injective}
   x_{j+1,1}-x_{j,1} = x'_{j+1,1}-x'_{j,1}.  
\end{equation}
For $j=1,$ we have 
$$ x_{2,1}-x_{1,1}=x'_{2,1}-x'_{1,1}.$$
Suppose that $T$ and $T'$ have weights $\wt(T)=(t_1,t_2,\dots,t_n)$ and $\wt(T')=(t'_1,t'_2,\dots,t'_n)$ respectively. Then $t_1=x_{1,1}$ and $t'_1=x'_{1,1}$. Since $\wt(T)=\wt(T'), x_{1,1}=x'_{1,1}.$ Therefore,
\begin{equation}
    \label{eq:j=1}
    x_{2,1}=x'_{2,1}.
\end{equation}
Putting $j=2$ in \eqref{eq:injective} we obtain,
$$ x_{3,1}-x_{2,1}=x'_{3,1}-x'_{2,1}.$$
Applying \eqref{eq:j=1}, we have $x_{3,1}=x'_{3,1}$. In the same way, $x_{n,1}=x'_{n,1}$. Similarly, for each fixed $k \in \{ 2, \dots, n-1\},$ $y_{n-k,j}=y'_{n-k,j} \forall j \in \{ 1, \ldots, n-k\}$ implies $x_{r,k}=x'_{r,k} \forall k \leq r \leq n$. Now for $n >1,$ let us define
$$A_n:=\{(n,1),(n,2),\dots,(n,n-1) \}.$$
We have 
$$
t_n=p_n^n(T_1^1)+\cdots + p_n^n(T_{n-1}^{n-1}) +p_n^n(T_n^n). 
$$
Then using \eqref{eq:mij}, we have
$$t_n=(x_{n,1}-x_{(n-1),1})+\cdots +(x_{n,n-1}-x_{(n-1),(n-1)}) +x_{n,n} +|A_n \cap M_T|,$$
$$t'_n=(x'_{n,1}-x'_{(n-1),1})+\cdots +(x'_{n,n-1}-x'_{(n-1),(n-1)}) +x'_{n,n} +|A_n \cap M_{T'}|.$$
Since $\wt(T)=\wt(T'),t_n=t'_n$. Also, we have proved that $x_{n,1}=x'_{n,1},\dots, x_{n,n-1}=x'_{n,n-1}$ and $x_{(n-1),1}=x'_{(n-1),1},\dots, x_{(n-1),(n-1)}=x'_{(n-1),(n-1)}$. Consequently, $ x_{n,n}=x'_{n,n}$ is obtained from the relations $t_n=t'_n$ and $M_T=M_{T'}$. Thus $X_T=X_{T'}$. Now combining the facts that $X_T=X_{T'}$ and $M_T=M_{T'}$, we conclude that $\Gamma$ is injective.

Now we prove that $\Gamma$ is surjective. Let $S \in \SVCT_{\nu-\mu}^{\mu}(\lambda)$ and $\Omega^{-1}(S)=(Z_{S},M_{S})$. Let $Z_{S}=(z_{i,j})_{1\leq j \leq i \leq n}$. For $1\leq j \leq i \leq n,$ we define the following
\begin{equation}
    \label{eq:dij}
     D_{i,j}:= \left\{
    \begin{array}{ll}  \bigl\{(n-j+1,1),(n-j+1,2),\dots, (n-j+1,i-j) \bigl\} &  \quad \text{if} \quad 1\leq j<i \leq n, \\
       \emptyset & \quad  \text{if} \quad 1\leq j=i \leq n.
       %1 \leq i \leq n.
     \end{array} \right.
\end{equation}
For $1 \leq j \leq n,$ we also define
\begin{equation}
    \label{eq:AB}
 z_{n-j,0}:=\nu_j. 
\end{equation}
Then we define another triangular array $Z_{S'}=(z'_{i,j})_{1 \leq j \leq i \leq n}$ by
\begin{equation}
\label{eq:z'_{i,j}}
    z'_{i,j}:=(z_{n-j,0} + z_{n-j,1} + \cdots + z_{n-j,i-j}) - (z_{n-j+1,1} +z_{n-j+1,2}+\cdots +z_{n-j+1,i-j+1})- |D_{i,j} \cap M_{S}|.
\end{equation}
In the above expression, $|D_{i,j} \cap M_{S}|$ denotes the cardinality of the set $D_{i,j} \cap M_{S}$. Then we show that $Z_{S'} \in \GT_{\mathbb{Z}}(\mu)$.
For $1 \leq j < i \leq n,$ we have
$$NE_{i,j}(Z_{S'})=z'_{i,j}-z'_{i-1,j}=(z_{n-j,i-j}-z_{n-j+1,i-j+1}) -|D_{i,j} \cap M_S| + |D_{i-1,j} \cap M_S|.$$
By definition, $D_{i,j}=D_{i-1,j} \cup \{(n-j+1,i-j)\}$. Thus
\begin{equation}
\label{eq:d i-1 to i}
 |D_{i-1,j} \cap M_S| = \left\{ 
    \begin{array}{ll}
        |D_{i,j} \cap M_S| -1 & \quad \text{if} \quad (n-j+1,i-j) \in M_{S},  \\
        |D_{i,j} \cap M_S| & \quad \text{elsewhere}. 
    \end{array}  \right. 
\end{equation}
Thus we obtain
\begin{equation}
    \label{eq:NE_{i,j}}
    NE_{i,j}(Z_{S'})=z'_{i,j}-z'_{i-1,j}=SE_{n-j+1,i-j}(Z_S) -|\{ (n-j+1,i-j) \cap M_S\}|.
\end{equation}
By Definition~\ref{def:MGT},
\begin{equation*}
 SE_{n-j+1,i-j}(Z_S) \geq \left\{ 
    \begin{array}{ll}
        1 & \quad \text{if} \quad (n-j+1,i-j) \in M_{S},  \\
        0 & \quad \text{elsewhere}. 
    \end{array}   \right. 
\end{equation*}
Thus we have
\begin{equation}
    \label{eq:B1}
NE_{i,j}(Z_{S'}) \geq 0.    
\end{equation}
Given $1 \leq j < i \leq n,$ we get
\begin{equation}
    \label{eq:SE_{i,j}(Z_S)}
     SE_{i,j}(Z_{S'})=z'_{i-1,j}-z'_{i,j+1}=(z_{n-j,0}- A_{i,j})-(z_{n-j-1,0}-B_{i,j}),
\end{equation}
where
\begin{equation}
    \label{eq:A_{i,j}}
A_{i,j}=(z_{n-j+1,1}-z_{n-j,1})+ \cdots  + (z_{n-j+1,i-j}-z_{n-j,i-j}) + |D_{i-1,j} \cap M_S|,  
\end{equation}
and 
\begin{equation}
    \label{eq:B_{i,j}}
B_{i,j}= (z_{n-j,1}-z_{n-j-1,1})+ \cdots + (z_{n-j,i-j-1}-z_{n-j-1,i-j-1}) + |D_{i,j+1}\cap M_S|.
\end{equation}
We recall that $S_{p,[q',q]}^{\uparrow}$ denotes the number of occurrences of the entry $p$ within the rows $S_{q'}^{\uparrow},S_{q'+1}^{\uparrow},\dots,S_q^{\uparrow},$ where $S_i^{\uparrow}$ denotes the $i^{th}$ row of $S$ indexed from the bottom. The semi-standard property of the set-valued contratableau $S$ implies that the entry $j$ must lie in $S_{1}^{\uparrow},S_{2}^{\uparrow},\dots,S_{n-j+1}^{\uparrow}.$ Thus $S_{j,[k,k]}^{\uparrow} =0 $ for $k > n-j+1.$ It is worth noting that $(n-j+1,n-j+1) \notin M_S$, as the condition $i >j$ holds for any $(i,j) \in M_S$ (see Definition~\ref{def:MGT}). Now  assuming $z_{n-j,n-j+1}:=0,$ we have for $1 \leq k \leq n-j+1$
\begin{equation}
    \label{eq:uparrow-2}
S_{j,[k,k]}^{\uparrow}= (z_{n-j+1,k}-z_{n-j,k}) + |\{(n+j-1,k)\} \cap M_S|. 
\end{equation}
As $S_{j,[1,i-j]}^{\uparrow}=S_{j,[1,1]}^{\uparrow}+\cdots+S_{j,[i-j,i-j]}^{\uparrow},$
it follows  from \eqref{eq:uparrow-2} that 
\begin{equation}
  \label{eq:spq}
S_{j,[1,i-j]}^{\uparrow}=(z_{n-j+1,1}-z_{n-j,1})+\cdots + (z_{n-j+1,i-j}-z_{n-j,i-j}) + |D_{i,j} \cap M_S|.
\end{equation}
Thus using \eqref{eq:d i-1 to i}, \eqref{eq:spq} in \eqref{eq:A_{i,j}} we get
\begin{equation*}
%\label{eq:Aij to S}
 A_{i,j} = \left\{ 
    \begin{array}{ll}
        S_{j,[1,i-j]}^{\uparrow} -1 & \quad \text{if} \quad (n-j+1,i-j) \in M_{S},  \\
        S_{j,[1,i-j]}^{\uparrow} & \quad \text{elsewhere}. 
    \end{array}  \right. 
\end{equation*}
Let $\wt(S)=(s_1,s_2,\dots,s_n).$ Since $\wt(S)=\nu-\mu,$ it follows that $s_j=\nu_j-\mu_j$. We also note that $s_j=S_{j,[1,i-j]}^{\uparrow}+S_{j,[i-j+1,n]}^{\uparrow}$ and $z_{n-j,0}=\nu_j$ (see \eqref{eq:AB}). Hence $\nu_j - S_{j,[1,i-j]}^{\uparrow}=\mu_j +S_{j,[i-j+1,n]}^{\uparrow} = \N^{\uparrow}_{j,i-j+1}(S)$ (by \eqref{eq:nik2}). Thus
\begin{equation}
  \label{eq:z-Aij}
z_{n-j,0}- A_{i,j} = \left\{ 
    \begin{array}{ll}
        \N^{\uparrow}_{j,i-j+1}(S) +1 & \quad \text{if} \quad (n-j+1,i-j) \in M_{S},  \\
        \N^{\uparrow}_{j,i-j+1}(S) & \quad \text{elsewhere}. 
    \end{array}  \right.
\end{equation}
Similarly, $B_{i,j}=S_{j+1,[1,i-j-1]}^{\uparrow}.$ Thus
\begin{equation}
  \label{eq:z-Bij}
z_{n-j-1,0}-B_{i,j} = \nu_{j+1}-S_{j+1,[1,i-j-1]}^{\uparrow}= \mu_{j+1} + S_{j+1,[i-j,n]}^{\uparrow} = \N^{\uparrow}_{j+1,i-j}(S) .
\end{equation}
Using \eqref{eq:z-Aij},\eqref{eq:z-Bij} in \eqref{eq:SE_{i,j}(Z_S)}, we have
\begin{equation*}
%  \label{eq:SE_{i,j}(Z_S)-2}
S_{i,j}(Z_{S'}) = \left\{ 
    \begin{array}{ll}
        \N^{\uparrow}_{j,i-j+1}(S)-\N^{\uparrow}_{j+1,i-j}(S)+1 & \quad \text{if} \quad (n-j+1,i-j) \in M_{S},  \\
        \N^{\uparrow}_{j,i-j+1}(S)-\N^{\uparrow}_{j+1,i-j}(S) & \quad \text{elsewhere}. 
    \end{array}  \right.
\end{equation*}
Since $S$ is $\mu$-dominant, $\N^{\uparrow}_{j,i-j+1}(S)\geq \N^{\uparrow}_{j+1,i-j}(S)$ (by \eqref{eq:uparrow}). Thus
\begin{equation}
  \label{eq:Sij(Z_S)}
S_{i,j}(Z_{S'}) \geq \left\{ 
    \begin{array}{ll}
         1 & \quad \text{if} \quad (n-j+1,i-j) \in M_{S},  \\
        0 & \quad \text{elsewhere}. 
    \end{array}  \right.
\end{equation}
By \eqref{eq:z'_{i,j}}, we have
$$z'_{n,j}=(z_{n-j,0} + z_{n-j,1} + \cdots + z_{n-j,n-j}) - (z_{n-j+1,1} +z_{n-j+1,2}+\cdots 
+z_{n-j+1,n-j+1})- |D_{n,j} \cap M_S| .$$
By isolating the term $z_{n-j,0}$ and grouping the remaining terms, this expression simplifies to:
$$ z'_{n,j}= z_{n-j,0}-\Bigl\{(z_{n-j+1,1}-z_{n-j,1})+\cdots + (z_{n-j+1,n-j}-z_{n-j,n-j}) +z_{n-j+1,n-j+1} +| D_{n,j} \cap M_S| \Bigl \}.$$
We recall that $\wt(S)=(s_1,s_2,\dots,s_n).$ Using \eqref{eq:uparrow-2} and the following fact 
$$s_j= S_{j,[1,1]}^{\uparrow}+\cdots+S_{j,[n-j,n-j]}^{\uparrow}+S_{j,[n-j+1,n-j+1]}^{\uparrow},$$
we have
$$ s_j= (z_{n-j+1,1}-z_{n-j,1}) +\cdots + (z_{n-j+1,n-j}-z_{n-j,n-j}) +z_{n-j+1,n-j+1} +| D_{n,j} \cap M_S|.$$
Also, $s_j=\nu_j-\mu_j$ (since $\wt(S)=\nu-\mu$) and $z_{n-j,0}=\nu_j$ (see \eqref{eq:AB}). Thus $$z'_{n,j}=z_{n-j,0}-s_j=\nu_j-(\nu_j-\mu_j)=\mu_j.$$
Therefore, by \eqref{eq:B1}, \eqref{eq:Sij(Z_S)} and the above equation, we conclude that $Z_{S'} \in \GT_{\mathbb{Z}}(\mu)$ and using \eqref{eq:Sij(Z_S)} we can also say that
$(Z_{S'},M_{S'}) \in \MGT_{\mathbb{Z}}(\mu),$ where 
\begin{equation}
    \label{eq:z}
 M_{S'}:=\{(i,j):(n-j+1,i-j) \in M_S \}.   
\end{equation}
Let $\Upsilon(Z_{S'},M_{S'})=S'$. Then we show that $S' \in \SVT^{\lambda}_{\nu-\lambda}(\mu)$ such that $\Gamma(S')=S$.

For each $1 \leq i \leq n , 1 \leq k \leq i,$ we recall from \eqref{eq:nik} that 
\begin{equation}
    \label{eq:Y}
\N_{i,0}(S')= \lambda_i; \N_{i,k}(S')= \lambda_i+ p_i^i({S'}_1^k)=\lambda_i + p_i^i({S'}_1^1)+p_i^i({S'}_2^2)+\cdots + p_i^i({S'}_k^k).
\end{equation}
Since $p_i^i({S'}_r^r)=z'_{i,r}-z'_{i-1,r} +|\{(i,r)\} \cap M_{S'}| $ for $1 \leq r \leq i.$ Here we define $z'_{i-1,i}:=0.$
Thus we obtain
$$\N_{i,k}(S')= \lambda_i + (z'_{i,1}-z'_{i-1,1}) + \dots  +(z'_{i,k}-z'_{i-1,k}) + |\{(i,1),\dots,(i,k) \} \cap M_{S'}|. $$
Using \eqref{eq:NE_{i,j}}, we obtain

$\N_{i,k}(S')= \lambda_i + SE_{n,i-1}(Z_S) + SE_{n-1,i-2}(Z_S) + \cdots + SE_{n-k+1,i-k}(Z_S) - |\{ (n,i-1),(n-1,i-2),\dots, (n-k+1,i-k)\} \cap M_{S}| + |\{(i,1),\dots,(i,k) \} \cap M_{S'}|.$

By \eqref{eq:z}, $(i,j) \in  M_{S'}$ if and only if $(n-j+1,i-j) \in M_S$. Thus 
$$|\{ (n,i-1),\dots, (n-k+1,i-k)\} \cap M_{S}| = |\{(i,1),\dots,(i,k) \} \cap M_{S'}|.$$
Therefore we have
$$\N_{i,k}(S')=  \lambda_i + SE_{n,i-1}(Z_S) + SE_{n-1,i-2}(Z_S) + \cdots + SE_{n-k+1,i-k}(Z_S).$$
Since $SE_{i,j}(Z_S)=z_{i-1,j}-z_{i,j+1},$ we get
$$\N_{i,k}(S') = \lambda_i +(z_{n-1,i-1}-z_{n,i}) + (z_{n-2,i-2} -z_{n-1,i-1}) + \cdots + (z_{n-k,i-k}-z_{n-k+1,i-k+1}).$$
Using $z_{n,i}=\lambda_i,$ and then simplifying the above expression, we have 
\begin{equation}
    \label{eq:X}
\N_{i,k}(S')=z_{n-k,i-k},
\end{equation}
for $0 \leq k \leq i.$
%Now for $1 \leq k \leq i,$
Thus for $ 1 \leq i <n , 0 \leq k \leq i,$ we have
$$ \N_{i,k}(S')-\N_{i+1,k+1}(S') = z_{n-k,i-k}-z_{n-k-1,i-k}= NE_{n-k,i-k}(Z_S) \geq 0 .$$
%$$ \N_{i,0}(S')-\N_{i+1,1}(S')=\lambda_i-\lambda_{i+1} \geq %0.$$
Hence by \eqref{eq:domi}, we conclude that $S'$ is $\lambda$-dominant.
Also, by \eqref{eq:Y}, \eqref{eq:X}, we have 
$$\N_{j,j}(S')=\lambda_i +p_j^j({S'}_1^j) = z_{n-j,0} =\nu_j .$$
Thus, $p_j^j({S'}_1^j) = \nu_j-\lambda_j.$ By the semi-standard property of $S',$ the entry $j$ is restricted to the top $j$ rows of $S'$. Hence if $\wt(S')=(s'_1,\dots,s'_n),$ then $s'_j=p_j^j({S'}_1^j)=\nu_j-\lambda_j$. Thus $\wt(S')=\nu-\lambda$. Also, we already proved that $S'$ is a $\lambda$-dominant semi-standard set-valued tableau of shape $\mu$. Therefore $S' \in \SVT^{\lambda}_{\nu-\lambda}(\mu)$.

Now if $\Gamma(S')=\Bar{S}$ and $\Omega(Y_{\Bar{S}},M_{\Bar{S}})=\Bar{S}$, where $Y_{\Bar{S}}=(\Bar{y}_{i,j})_{1 \leq j \leq i \leq n}$, then using \eqref{eq:yij} and \eqref{eq:X},
$$ \Bar{y}_{i,j}=\N_{n-i+j,n-i}(S')=z_{n-(n-i),(n-i+j)-(n-i)}=z_{i,j}.$$
Thus $Y_{\Bar{S}}=Z_S.$ Also, using \eqref{eq:m't}, $M_{\Bar{S}}=\{(n-j+1,i-j): (i,j) \in M_{S'}\}$.

Let $(n-j+1,i-j) \in M_{\Bar{S}}$. Then $(i,j) \in M_{S'}.$ So by \eqref{eq:z}, $(n-j+1,i-j) \in M_S$. Thus $M_{\Bar{S}} \subset M_S.$ Conversely, let $(i,j) \in M_S$ and $i_1=n-i+j+1, j_1=n-i+1$. Then $(n-j_1+1,i_1-j_1)=(i,j) \in M_S$. So $(i_1,j_1) \in M_{S'}$ (by \eqref{eq:z}). Hence $(n-j_1+1,i_1-j_1)=(i,j) \in M_{\Bar{S}}$. Thus $ M_S \subset M_{\Bar{S}},$ it follows that $M_S=M_{\Bar{S}}$. So $\Gamma(S')= \Bar{S}=\Omega(Y_{\Bar{S}},M_{\Bar{S}})=\Omega(Z_S,M_S)= S$. Thus we conclude that $\Gamma$ is surjective.
\end{proof}
\begin{example}
Let $\lambda=(3,2,1), \mu =(3,1), \nu =(4,4,3,2)$. Then $\SVT_{\nu-\lambda}^{\lambda}(\mu)$ contains the following two tableaux:
    $$ T=  \ytableausetup{mathmode,
notabloids,boxsize=2.4em}
  \begin{ytableau}
    1&2,\!3 &4 \\2,\!3,\!4   
  \end{ytableau} , S=  \ytableausetup{mathmode,
notabloids,boxsize=2.4em}
  \begin{ytableau}
    1& 2& 3,\!4\\2,\!3,\!4   
  \end{ytableau}.$$
Then $\SVCT_{\nu-\mu}^{\mu}(\lambda)$ contains the following two tableaux:
$$ \Tilde{T}=  \ytableausetup{mathmode,
notabloids,boxsize=2.4em}
  \begin{ytableau}
  \none & \none &2 \\
  \none & 1,\!2,\!3 &3 \\
   2,\!3 & 4 & 4    
  \end{ytableau} , \Tilde{S}=  \ytableausetup{mathmode,
notabloids,boxsize=2.4em}
  \begin{ytableau}
    \none & \none &1,\!2 \\
  \none & 2,\!3 &3 \\
   2,\!3 & 4 & 4     
  \end{ytableau}.$$
Let $\Upsilon^{-1}(T)=(X_{T},M_{T})$, where 
\begin{equation}
    \label{eq:ex-m_t}
   M_{T}=\{ (3,1), (3,2), (4,2) \} 
\end{equation} and 
\begin{center}
\begin{tikzpicture}[scale=0.8+0.2]
	\draw (1.5,1.5*1.732) node {$1$};
	\draw (1,1.732) node {$2$};
	\draw (2,1.732) node {$1$};
    \draw (0.5-1.5,0.5*1.732) node {$X_{T}=$};
	\draw (0.5,0.5*1.732) node {$2$};
	\draw (1.5,0.5*1.732) node {$1$};
	\draw (2.5,0.5*1.732) node {$0$};
	\draw (0,0) node {$3$};
	\draw (1,0) node {$1$};
	\draw (2,0) node {$0$};
	\draw (3,0) node {$0$};
\end{tikzpicture}.
\end{center}
Then under the map $\Gamma$ defined in \eqref{eq:Gamma}, the set-valued tableau $T$ corresponds to the element $ \Omega(Y_T,M'_T) \in \SVCT_{\nu-\mu}^{\mu}(\lambda)$ (see Proposition~\ref{proposition:contretableau}). Here, the Gelfand--Tsetlin pattern
$Y_T=(y_{i,j})_{1 \leq j \leq i \leq 4},$ is determined by \eqref{eq:yij}, i.e., by $y_{i,j}=\N_{4-i+j,4-i}(T)$. Thus we obtain
$$
\begin{tikzpicture}[scale=0.8+0.2]
	\draw (1.5,1.5*1.732) node {$2$};
	\draw (1,1.732) node {$3$};
	\draw (2,1.732) node {$2$};
    \draw (0.5-1.5,0.5*1.732) node {$Y_{T}=$};
	\draw (0.5,0.5*1.732) node {$3$};
	\draw (1.5,0.5*1.732) node {$2$};
	\draw (2.5,0.5*1.732) node {$1$};
	\draw (0,0) node {$3$};
	\draw (1,0) node {$2$};
	\draw (2,0) node {$1$};
	\draw (3,0) node {$0$};
\end{tikzpicture}.$$
Also, $M'_{T}$ is defined by via \eqref{eq:m't}:
$$M'_{T}=\{(5-j,i-j): (i,j) \in M_{T}\}.$$
Substituting the values from \eqref{eq:ex-m_t}, we obtain:
$$M'_{T}= \{ (4,2), (3,1), (3,2) \}.$$
Now we see that $\Omega(Y_{T},M'_{T})=\Tilde{T}  $ (by Proposition~\ref{proposition:contretableau}) as follows:
$$ 
\begin{tikzpicture}[scale=1.6]
  \draw (0,0)--(0,0.5)--(1,0.5)--(1,0)--(0,0);  
  \draw (0.5,0)--(0.5,0.5);
  \draw (0.25,0.25) node {$4$};
  \draw (0.75,0.25) node {$4$};
  \draw (0.5,-0.5) node {$Y^{(1)}$};
\end{tikzpicture}
\begin{tikzpicture}
    \node at (0,0)   {$\null$};
    \node at (0.5,0) {$\null$};
\end{tikzpicture}
\begin{tikzpicture}[scale=1.6]
  \draw (0,0)--(0,0.5)--(1,0.5)--(1,0)--(0,0);  
  \draw (0.5,0)--(0.5,1)--(1,1);
  \draw (1,0.5)--(1,1)--(1.5,1)--(1.5,0)--(1,0);
  \draw (1,0.5)--(1.5,0.5);
  \draw (0.25,0.25) node {$3$};
  \draw (0.75,0.25) node {$4$};
  \draw (1.25,0.25) node {$4$};
  \draw (0.75,0.75) node {$3$};
  \draw (1.25,0.75) node {$3$};
  \draw (0.5+0.3,-0.5) node {$Y^{(2)}$};
\end{tikzpicture}
\begin{tikzpicture}
    \node at (0,0)   {$\null$};
    \node at (0.5,0) {$\null$};
\end{tikzpicture}
\begin{tikzpicture}[scale=1.6]
  \draw (0,0)--(0,0.5)--(1,0.5)--(1,0)--(0,0);  
  \draw (0.5,0)--(0.5,0.5);
  \draw (1,0.5)--(1,1)--(1.5,1)--(1.5,0)--(1,0);
  \draw (1,0.5)--(1.5,0.5);
  \draw (0.5,0.5)--(0.5,1)--(1,1)--(1,1.5)--(1.5,1.5)--(1.5,1);
  \draw (0.25,0.25-0.02) node {$2,\!3$};
  \draw (0.75,0.25) node {$4$};
  \draw (1.25,0.25) node {$4$};
  \draw (1.25,0.75) node {$3$};
  \draw (0.75,0.75-0.02) node {$2,\!3$};
  \draw (1.25,1.25) node {$2$};
  \draw (0.5+0.3,-0.5) node {$Y^{(3)}$};
\end{tikzpicture}
\begin{tikzpicture}
    \node at (0,0)   {$\null$};
    \node at (0.5,0) {$\null$};
\end{tikzpicture}
\begin{tikzpicture}[scale=1.6]
  \draw (0,0)--(0,0.5)--(1,0.5)--(1,0)--(0,0);  
  \draw (0.5,0)--(0.5,0.5);
  \draw (1,0.5)--(1,1)--(1.5,1)--(1.5,0)--(1,0);
  \draw (1,0.5)--(1.5,0.5);
  \draw (0.5,0.5)--(0.5,1)--(1,1)--(1,1.5)--(1.5,1.5)--(1.5,1);
  \draw (0.25,0.25-0.02) node {$2,\!3$};
  \draw (0.75,0.25) node {$4$};
  \draw (1.25,0.25) node {$4$};
  \draw (1.25,0.75) node {$3$};
  \draw (0.75,0.75-0.02) node {$1,\!2,\!3$};
  \draw (1.25,1.25) node {$2$};
  \draw (0.5+0.3,-0.5) node {$Y^{(4)} = \Tilde{T}$};
\end{tikzpicture}.$$
Similarly, it can be checked that $\Gamma(S)=\Tilde{S}$. 
\end{example}
\subsection*{Acknowledgements}
The author expresses sincere gratitude to the referee for carefully reading the manuscript and for providing insightful comments that have immensely contributed to improving the standard of this manuscript. The author also acknowledges that the work was initiated during his tenure at TIFR Mumbai and concluded while he was at NISER Bhubaneswar. The manuscript was subsequently extensively revised after the author joined IIT Kanpur.

\end{document}